\documentclass[11pt]{article}
\usepackage{amsmath,amsfonts,amsthm,amssymb, mathtools}
\usepackage{mathrsfs, graphicx,color,latexsym, tikz, calc,
}
\usepackage{tikz-cd}
\usepackage{graphicx}
\usepackage{epstopdf}
\usepackage{enumerate}
\usepackage{caption}
\usepackage{stmaryrd}

\usepackage{xcolor}
\usepackage{hyperref}
\hypersetup{
    colorlinks=true,
    citecolor=blue,
    linkcolor=blue,
    urlcolor=blue
}

\usetikzlibrary{shadows}
\usetikzlibrary{patterns,arrows,decorations.pathreplacing}
\numberwithin{equation}{section}
\newtheorem{theorem}{Theorem}[section]
\newtheorem{lemma}[theorem]{Lemma}
\newtheorem{definition}[theorem]{Definition}

\newtheorem{conjecture}[theorem]{Conjecture}
\newtheorem{corollary}[theorem]{Corollary}
\newtheorem{remark}[theorem]{Remark}

\newcommand{\F}{\mathcal{F}}

\allowdisplaybreaks[4]

\date{\today}
\title{\bf \Large An improved range for the maximum critically $t$-intersecting hypergraphs}
	\author{
{\small    Lu Lu$^1$,  \ \ Rongrong Lu$^1$, \ \  Qifan Wang$^{1,}$\footnote{Corresponding author.\newline{ \hspace*{5mm}Email addresses:}   \url{lulugdmath@163.com} (L. Lu), \url{lrr999a@163.com} (R. Lu), \url{jv01065499zai@163.com} (Q. Wang), \url{mathtzwu@163.com} (T. Wu). }, \ \ Tingzeng Wu$^2$
}\\[2mm]
\small $^1$School of Mathematics and Statistics, HNP-LAMA, Central South University\\
 \small Changsha, Hunan, 410083, China\\
\small  $^2$School of Mathematics and Statistics, Qinghai Minzu University\\
\small  Xining, Qinghai, 810007, China
}

\begin{document}
	\maketitle
	\begin{abstract}
		Let $k>t\ge 1$ be integers and set $d=k-t$.  A $k$-uniform hypergraph $\mathcal F$ is called $t$-intersecting if any two edges intersect in at least $t$ vertices, and is called $t$-critical if its minimum $t$-transversal has size $k$.  Frankl proved that, for $k\ge d^4$,
		$
		|\mathcal F|\le \binom{k+d}{d},
		$
		with equality only for the complete $k$-graph on $k+d$ vertices, and conjectured that the same conclusion should  hold when $k>c d^2$ for some constant $c$.  In this paper we confirm this conjecture for $c=30$. The proof  relies on Frankl's fixed-edge decomposition and F\"{u}redi's pseudo-sunflower method.

	\end{abstract}

	{\bf AMS Classification}: 05C65; 05D05
	
	{\bf Key words}: Hypergraphs; Critically-intersecting; Pseudo-sunflower

	\section{Introduction}

Extremal Combinatorics is one of the main branches among modern combinatorics. It deals with the problems
about how big or how small a discrete structure can be given that it satisfies certain requirements. Many
natural mathematical objects and requirements can be investigated. As a natural extension of general graphs, the theory of
hypergraphs encounters more challenging problems and attracts much attention. In this paper, we are concerned with certain critically intersecting hypergraphs.

	Let $k> t\ge 1$ be integers. Let $\mathcal F$ be a $k$-\emph{uniform} hypergraph, that is $\F$ consists of distinct $k$-element sets. And let $\F$ be $t$-\emph{intersecting}, that is $F\cap F'\ge t$ for any $F,F'\in \F$.
	
	A set $T$ is called a $t$-\emph{transversal} of $\mathcal F$ if
	$
	|T\cap F|\ge t
	$ for all $F\in\mathcal F.
	$  Let $\tau_t(\mathcal F)$ be the minimum size of a $t$-transversal. Since $\F$ is $t$-intersecting, every edge of $\F$ is a $t$-transversal of $\F$. Then  $\tau_t(\mathcal F)\le k$. $\F$ is called $t$-\emph{critical} if $\tau_t(\mathcal F)= k$. When $t$ is clear, we use the term \emph{critical} as shorthand.
	
	Define
	\[
	m(k,t)=\max\{|\mathcal F|:\mathcal F\text{ is }t\text{-intersecting and }\text{critical}\}.
	\]
	Put
	$
	d=k-t.
	$
	A natural example is the complete $k$-graph on a set $Y$ of size $k+d$:
	\[
	\mathcal K(Y)=\binom{Y}{k}.
	\]
	It is easy to check that any two $k$-subsets of $Y$ intersect in at least
	$
	2k-(k+d)=k-d=t
	$
	vertices, and no set of size $k-1$ is a $t$-transversal. Hence $\mathcal K(Y)$ is $t\text{-intersecting and }\text{critical}$, then
	\[
	m(k,k-d)\ge |\mathcal K(Y)|=\binom{k+d}{d}.
	\]
	
	The study of critical intersecting hypergraphs dates back to Erd\H{o}s and Lov\'asz \cite{EL}, cf. also \cite{Lovasz1975}.  They considered the case $t=1$ which is closely related to the maximum size of an intersecting $k$-graph with covering number $k$, and they obtained   $k!(e-1)\le m(k,1)\le k^k$.
 For other related results, one may refer to \cite{Tuza,AT,Frankl2019,Zakharov2024, FOT1996}.
	
	
	 Frankl recently proved the following theorem \cite{FranklCritical}.
	\begin{theorem}[Frankl \cite{FranklCritical}] \label{FranklCritical}
		For positive integers $d$ and $k\ge d^4$,
		$$
		m(k,k-d)=\binom{k+d}{d}.
		$$
		Moreover, the equality holds for the complete $k$-graph on $k+d$ vertices.
		\end{theorem}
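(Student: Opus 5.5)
The plan is to bound $|\mathcal F|$ through Frankl's fixed-edge decomposition and then control each piece with Füredi's pseudo-sunflower method, which is the approach the paper itself announces.

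\textbf{Setup.} Fix an edge $F_0\in\mathcal F$. Every other edge $F$ satisfies $|F\cap F_0|\ge t$, so $|F\setminus F_0|\le d$. We partition $\mathcal F$ according to the trace $F\cap F_0$ and the ``outside part'' $F\setminus F_0$. Criticality enters in the following way. For any set $S$ with $|S|\le k-1$ there is an edge $F$ with $|F\cap S|<t$. Taking $S\subset F_0$ of size $k-1$ shows that the traces are spread out. Taking $S=(F_0\setminus A)\cup B$, for suitable small $A\subset F_0$ and $B$ outside, forces many distinct edges that avoid prescribed configurations.

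\textbf{Branching process.} The goal is an upper bound of the form $|\mathcal F|\le\sum_{j\le d}N_j$, where $N_j$ counts edges with $|F\setminus F_0|=j$. We would iterate the fixed-edge argument. Given a set $S$ with $|S|<k$ that is not a $t$-transversal, pick an edge meeting $S$ in fewer than $t$ points. That edge has at least $k-t+1=d+1$ of its elements outside $S$ (relative to the structure), and we branch on which of these elements to add. Tracking the depth, each edge is reached by a sequence of at most $d$ additions of elements chosen from sets of size about $k+d$. With the careful pseudo-sunflower counting we want the total to be $\binom{k+d}{d}$ plus lower-order error terms.

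\textbf{Pseudo-sunflower step.} For a fixed kernel $C$, consider the family $\{F\setminus C\}$. If many edges share $C$ and their remainders form a large pseudo-sunflower, in Füredi's sense that every intersection of petals is covered, then $C$ plus a few elements would be a $t$-transversal of size less than $k$, contradicting criticality. This bounds the number of edges per kernel by roughly $\binom{k+d}{d}$ divided by the appropriate factor. Summing over kernels gives $|\mathcal F|\le\binom{k+d}{d}\bigl(1+O(d^4/k)\bigr)$-type estimates. The next step is a stability argument: if $\mathcal F\ne\binom{Y}{k}$ for every $(k+d)$-set $Y$, then some edge has an element outside a fixed $(k+d)$-set, and this loses a factor that dominates the error once $k\ge d^4$.

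\textbf{Main obstacle.} We expect the hardest step to be controlling the error terms so that they are beaten by the loss coming from non-completeness. This is exactly where the threshold $k\ge d^4$ arises: cross terms of order $d^2\cdot d^2/k$ must be less than $1$. To handle it we would first try to show that all but $O(d^2)$ elements of $\bigcup\mathcal F$ lie in a $(k+d)$-set $Y$, using degree counting, and then argue by exchange that any edge leaving $Y$ forces at least $\binom{k+d-1}{d-1}/k$ missing sets in $\binom{Y}{k}$.
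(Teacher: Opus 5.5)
\textbf{There is a genuine gap: nothing in your plan produces the exact value $\binom{k+d}{d}$.} The paper quotes this theorem from Frankl; its Section~3 runs Frankl's fixed-edge scheme, so that is the comparison here.

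\textbf{Your counting does not give the estimate you claim.} What the pseudo-sunflower method gives per fiber is this. For $P\subseteq F_0$, the fiber $\mathcal F(P)=\{F\setminus F_0: F\in\mathcal F,\ F_0\setminus F=P\}$ has no pseudo-sunflower of size $d+1$; your transversal $(F_0\setminus A)\cup B$, with $A=P$ and $B$ the center, is exactly the proof. Hence $|\mathcal F(P)|\le d^{|P|}$. Summing over all $P$ gives $\sum_\ell\binom{k}{\ell}d^\ell$. Its top layer alone is $d^d\binom{k}{d}$, roughly $d^d$ times $\binom{k+d}{d}$. Your sketch contains no argument that large fibers are rare; that needs something like Lemma~\ref{lem:frankl-facts}(2) together with an Erd\H{o}s--Ko--Rado/Wilson count. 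The branching process is worse: a depth-$d$ branching over sets of size about $k+d$ counts ordered choices, giving bounds of order $(k+d)^d\approx d!\binom{k+d}{d}$. You give no mechanism to remove the $d!$.

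\textbf{The stability step fails on its own terms.} Grant $|\mathcal F|\le(1+Cd^4/k)\binom{k+d}{d}$. The loss you propose is $\binom{k+d-1}{d-1}/k=\frac{d}{k(k+d)}\binom{k+d}{d}$. This is smaller than the error $Cd^4k^{-1}\binom{k+d}{d}$ by a factor of order $d^3(k+d)$. The claim that all but $O(d^2)$ vertices lie in one $(k+d)$-set is also unsupported.

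\textbf{The missing idea: an exact identity plus a dichotomy, not an asymptotic estimate plus stability.}
\begin{itemize}
\item Call $P\subseteq F_0$ rich if $|\bigcup\mathcal F(P)|>d$.
\item A non-rich fiber satisfies $|\mathcal F(P)|\le\binom{d}{|P|}$. By Vandermonde, $\sum_{\ell=0}^{d}\binom{k}{\ell}\binom{d}{\ell}=\binom{k+d}{d}$ exactly. So with no rich sets the bound, and uniqueness for $k>2d$, is Lemma~\ref{lem:frankl-facts}(4).
\item If some $R_0$ is rich, the key fact is Lemma~\ref{lem:frankl-facts}(3). The rich sets and the top-layer support $\mathcal G=\{G\in\binom{F_0}{d}:\mathcal F(G)\neq\emptyset\}$ are cross-intersecting. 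So every $d$-subset of $F_0\setminus R_0$ has an empty fiber.
\item The top layer therefore loses at least $\binom{k-d}{d}\ge(1-2d^2/k)\binom{k}{d}$, which is essentially the whole main term.
\item It remains to show the total excess $\sum_{P\in\mathcal R}\bigl(|\mathcal F(P)|-\binom{d}{|P|}\bigr)_+$ is below this loss. Here the $d^{|P|}$ bound is combined with intersection properties of the large fibers, and this is where the threshold on $k$ enters. The paper does it with Theorem~\ref{thm:compression} and Wilson's theorem.
\end{itemize}
So the loss to beat is of order $\binom{k}{d}$, not $\binom{k+d-1}{d-1}/k$. Without the rich/non-rich split and the cross-intersection property, your plan cannot reach the exact value.
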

	
	 Frankl also  proved the case $d=1$, $k\ge 1$ and the case $d=2$, $k\ge3$.
	 \begin{theorem}[Frankl \cite{FranklCritical}]
	 	$m(k, k-2) = \binom{k+2}{2} \text{ for } k \ge 3.$
	 	Moreover,  for $k \ge 4$,  equality holds only if $\mathcal{F} = \binom{Y}{k}$ for some $(k+2)$-set $Y$.
	 \end{theorem}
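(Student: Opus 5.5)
We will show that every $(k-2)$-intersecting $k$-uniform family $\F$ with $\tau_{k-2}(\F)=k$ satisfies $|\F|\le\binom{k+2}{2}$, and that for $k\ge 4$ equality forces $\F=\binom{Y}{k}$ for some $(k+2)$-set $Y$. Note first that the critical hypothesis forces $\F$ to be large in a structural sense. If all members of $\F$ lay inside a common $(k+1)$-set $Z$, then $|F\cap Z'|\ge k-2$ would hold for every $F\in\F$ and any $(k-1)$-subset $Z'\subset Z$. That would give a $(k-2)$-transversal of size $k-1$, contradicting criticality. So the union of $\F$ has at least $k+2$ elements.

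\textbf{Setting up the frame.} Fix $A\in\F$. Every other edge $F$ meets $A$ in $k-2$, $k-1$ or $k$ elements, so $|F\setminus A|\in\{1,2\}$. Write $\F_1$ and $\F_2$ for the edges with $|F\setminus A|=1$ and $|F\setminus A|=2$, respectively. Criticality gives, for every $x\in A$, the following: $A\setminus\{x\}$ is not a $(k-2)$-transversal. Hence some $F\in\F$ contains $x$ and has $|F\cap A|=k-2$. Similarly, for every pair $x,y\in A$ and every $z\notin A$, the set $(A\setminus\{x,y\})\cup\{z\}$ is not a transversal. This witness-edge structure is what we will exploit.

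\textbf{Plan for the bound.}
\begin{enumerate}[(a)]
\item Analyse $\F_2$. Any two members $F,F'\in\F_2$ satisfy $|F\cap F'|\ge k-2$. Write $F=(A\setminus P)\cup Q$ with $|P|=|Q|=2$. The intersection condition then reads $|P\cup P'|-|Q\cap Q'|\le 2$. So either $P=P'$, or $|P\cap P'|=1$ and $Q\cap Q'\neq\emptyset$, or $P\cap P'=\emptyset$ and $Q=Q'$. The pairs $Q$ therefore form an intersecting-type configuration on $V\setminus A$, with $V$ the ground set. This should force either all $Q$ to lie inside a small set, or a star structure.
\item Handle the star structure. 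We will show that a star structure (all $Q$ containing a fixed $z$), or a small union of the $Q$'s, produces a $(k-1)$-transversal of the form $(A\setminus\{x\})\cup\{z\}$ minus a suitable vertex. The only exception is when the union of all edges has size exactly $k+2$.
\item Conclude in the non-star case. If $|\bigcup\F|=k+2$, then $\F\subseteq\binom{Y}{k}$ with $|Y|=k+2$, so $|\F|\le\binom{k+2}{2}$. In the remaining configurations (the $Q$'s form a triangle on three outside points) we count directly. Each $P$ is then constrained to pairwise-intersecting-or-equal classes, and the resulting families are small: bounded by roughly $3k$ plus $|\F_1|\le 2k$. This is below $\binom{k+2}{2}$ once $k$ is moderately large. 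Small $k$ ($k=3,4$) will be checked by hand.
\end{enumerate}

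\textbf{Equality.} Equality in the case $|Y|=k+2$ requires $\F=\binom{Y}{k}$. The other cases give strict inequality for $k\ge4$.

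\textbf{Main obstacle.} The hardest step is (b)--(c): the case analysis showing that whenever the union exceeds $k+2$ vertices, either a $(k-1)$-transversal exists or the family is much smaller. We will first try to choose $A$ and an outside vertex $z$ of maximum degree, and test the candidate $(A\setminus\{x,y\})\cup\{z\}$ against the witness edges from the frame.
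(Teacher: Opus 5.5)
The paper only quotes this theorem from Frankl and gives no proof of it, so your plan has to stand on its own. Your fixed-edge frame is the right one, since it is exactly Frankl's decomposition. The structural reduction in (a)--(c), however, is false.

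In (a) you pass from your (correct) pairwise condition to an ``intersecting-type'' family of $Q$'s, and then use the star/triangle dichotomy for intersecting graphs. But your own case analysis shows two things. First, $Q$'s attached to the same $P$ are unconstrained. Second, only $|P\cap P'|=1$ forces $Q\cap Q'\neq\emptyset$. So there are critical families with $|\bigcup\F|>k+2$ that your case split declares impossible.

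A counterexample to (b), for $k\ge 4$, goes as follows. Let $A=\{1,2,3,4\}\cup R$ with $|R|=k-4$. Add to $A$ the six sets $(A\setminus P)\cup\{z,q_i\}$, where $P$ runs over the edges of $K_4$ on $\{1,2,3,4\}$ and $i\in\{1,2,3\}$ indexes the perfect matching containing $P$.
\begin{enumerate}[(i)]
\item Every $Q$ contains $z$, the family is $(k-2)$-intersecting, and it spans $k+4$ vertices.
\item It is nevertheless critical. In the seven $4$-sets $F\setminus R$ the degrees are $6$ for $z$, $4$ for $1,\dots,4$, and $2$ for each $q_i$. A degree count then shows that a $(k-1)$-transversal would have to be $(A\setminus\{x,y\})\cup\{z\}$ with $\{x,y\}$ covering every edge of $K_4$, which is impossible.
\end{enumerate}

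The trichotomy itself also fails. Take the seven lines of a Fano plane on $\{a,b,c,u,v,w,x\}$ and extend each by a common $(k-3)$-set $R$. This is a critical $(k-2)$-intersecting family on $k+4$ vertices. Relative to \emph{any} edge, its $Q$'s are all six pairs of a $4$-set. That is neither a star, nor a triangle, nor contained in a $(k+2)$-set. So no choice of $A$ or $z$ rescues the case split.

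These families are small, but nothing in your plan shows that. What is missing is a bound on $|\F_2|=\sum_P|\F(P)|$ obtained by counting rather than by excluding configurations. The relevant facts are Lemma~\ref{lem:frankl-facts} specialised to $d=2$:
\begin{enumerate}[(i)]
\item If $\F(P)$ contains two distinct pairs, then $P$ meets every $P'$ with $\F(P')\neq\emptyset$.
\item $\F(P)$ contains no pseudo-sunflower of size $3$, so $|\F(P)|\le 4$.
\end{enumerate}
Since $1+2k+\binom{k}{2}=\binom{k+2}{2}$, you must show that the excess $\sum_P(|\F(P)|-1)_+$ is at most the number of $P\in\binom{A}{2}$ with $\F(P)=\emptyset$, and strictly less when the excess is positive. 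You also need a separate equality analysis when the excess is $0$.

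A straightforward version of this count only works for $k$ above a small constant. So the range down to $k=3$, and uniqueness for $k\ge4$, need real additional work. Even your own triangle estimate $1+2k+3k$ is not below $\binom{k+2}{2}$ for $k\le 7$, so checking $k=3,4$ by hand does not suffice.

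Finally, you assert $|\F_1|\le 2k$ without proof. It does hold: $(A\setminus\{p\})\cup\{q\}$ and $(A\setminus P)\cup Q$ are $(k-2)$-intersecting only if $p\in P$ or $q\in Q$, and criticality gives, for each $p$, some $P\not\ni p$. This $\F_1$--$\F_2$ interaction is absent from your plan, and it is also what a sharp count for small $k$ must exploit.
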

	
	 Furthermore, Frankl \cite{FranklCritical} stated that the threshold in Theorem  \ref{FranklCritical} can be improved to $k>2d^{3.5}$ with some efforts,  and he proposed the following conjecture.
	 \begin{conjecture}[Frankl \cite{FranklCritical}] \label{ConjF}
	 	There exists a constant $c$ such that for $k>c d^2$,
	 	\[
	 	m(k,k-d)=\binom{k+d}{d}.
	 	\]
	 \end{conjecture}
In this paper, we confirm
	  Conjecture \ref{ConjF}   for $c=30$,  and we also characterize  the extremal graphs.
	\begin{theorem}\label{thm:main}
		For positive integers $d$ and $k\ge30d^2$,
		\[
		m(k,k-d)=  \binom{k+d}{d}.
		\]
		Moreover, equality holds if and only if $\mathcal F$ is the complete $k$-graph on $k+d$ vertices.
	\end{theorem}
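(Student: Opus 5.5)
Our plan is to follow the strategy the abstract announces: Frankl's fixed-edge decomposition together with F\"uredi's pseudo-sunflower bound. Take $\mathcal F$ $t$-intersecting and critical with $|\mathcal F|\ge\binom{k+d}{d}$, and write $U=\bigcup\mathcal F$. If $|U|\le k+d$, then $\mathcal F\subseteq\binom{U}{k}$ and we are done. Equality then forces $|U|=k+d$ and $\mathcal F=\binom{U}{k}$, because $\binom{k+d-1}{k}<\binom{k+d}{d}$. So the main case is $|U|>k+d$, and there we aim for the strict inequality $|\mathcal F|<\binom{k+d}{d}$.

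\textbf{Step 1: decompose around a fixed edge.} Fix $F_0\in\mathcal F$. Every $F\in\mathcal F$ satisfies $|F\cap F_0|\ge t$, so $F$ is determined by the pair $(F\cap F_0,\,F\setminus F_0)$, where $|F\setminus F_0|=j\le d$. For $S\subseteq F_0$ define the link $\mathcal F(S)=\{F\setminus F_0: F\cap F_0=S\}$. Criticality enters as follows. For any $G$ with $|G|\le k-1$ some edge meets $G$ in fewer than $t$ vertices. Apply this to $G=(F_0\setminus\{x\})\cup(\text{small extra set})$ and, more generally, to sets obtained by deleting few vertices of $F_0$ and adding a few outside vertices. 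The conclusion is that small sets $A$ outside $F_0$ cannot be used to ``repair'' $F_0$ minus a small set $B$. This turns into a covering-number condition on the links: for every $B\subseteq F_0$ with $|B|=b\le d$, the family $\{F\setminus F_0: |F\cap B|\ \text{large}\}$ has $\tau$ larger than $b$ minus one, in the appropriate sense. This mirrors Frankl's argument.

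\textbf{Step 2: bound edges with large outside part.} Group the edges by $j=|F\setminus F_0|$. For each $j$, the family $\{(F\cap F_0)\}$ of $(k-j)$-subsets of $F_0$ is $t$-intersecting in a controlled way together with the outside parts. Here we apply F\"uredi's pseudo-sunflower lemma: a $k$-uniform family without a pseudo-sunflower of size $s$ at a given kernel size has at most about $\prod (s-1)$-type bounds. This gives $|\{F:|F\setminus F_0|=j\}|\le \binom{k}{j}\cdot\binom{d}{j}\cdot(\text{factor})$, where the factor records that the outside parts must be contained in a bounded-size kernel once we exclude sunflowers, because a large sunflower of outside parts plus criticality produces a $(k-1)$-transversal. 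Summing over $j$ and comparing with $\binom{k+d}{d}=\sum_j\binom{k}{j}\binom{d}{d-j}$, the dominant term is $j=d$, namely $\binom{k}{d}$. The task is to show that when $|U|>k+d$ the total loses at least a factor comparable to $d^2/k$ in the top terms, while the lower-order terms contribute at most $\binom{k}{d-1}d^{2}\approx\binom{k}{d}\,d^3/k$ with a small constant. The condition $k\ge 30d^2$ enters exactly here: the ratio $\binom{k}{d-1}/\binom{k}{d}=d/(k-d+1)$ is multiplied by an $O(d)$ pseudo-sunflower loss, and the product must stay below a constant less than $1$.

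\textbf{Step 3: use the extra vertex.} Choose $F_0$ to maximize a suitable parameter, for example the number of edges sharing its outside structure. Then show that a vertex $y\in U$ outside a $(k+d)$-set $Y$ spanned by most edges forces, by criticality and the $t$-intersection property, a deficit of at least about $\binom{k}{d-1}$ among the edges inside $Y$. This outweighs the edges containing $y$, which are bounded by $d\binom{k}{d-1}\cdot O(d/k)$ using Step 2. The main obstacle is Step 2: obtaining pseudo-sunflower bounds sharp enough, losing only a constant factor times $d^2/k$ rather than the $d^4/k$ of Frankl's original proof. I would first try applying F\"uredi's lemma separately to each link $\mathcal F(S)$, with kernel size restricted by the $t$-intersection with the other links, and tune the constants so that $30$ suffices.
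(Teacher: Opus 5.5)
There is a genuine gap. The decisive steps are placeholders (``in the appropriate sense'', ``a suitable parameter'', an unspecified ``factor''), and the two ingredients that make the count close are missing.

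First, you never state a precise consequence of criticality. The one that does the work is Frankl's. Fix an edge $E$, set $\F(P)=\{F\setminus E: F\in\F,\ E\setminus F=P\}$, and call $P$ rich if $|\bigcup\F(P)|>d$. Then the top-layer support $\mathcal G=\{G\in\binom{E}{d}:\F(G)\neq\emptyset\}$ is cross-intersecting with the family $\mathcal R$ of rich sets. Hence a single rich $R_0$ excludes every $d$-subset of $E\setminus R_0$ from $\mathcal G$. This is a top-layer deficit of at least $\binom{k-d}{d}\ge(1-2d^2/k)\binom{k}{d}\ge\frac{14}{15}\binom{k}{d}$. If instead $\mathcal R=\emptyset$, then $|\F(P)|\le\binom{d}{|P|}$ for every $P$, and Frankl's argument gives the bound and the equality case with no hypothesis on $|\bigcup\F|$. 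So the right dichotomy is $\mathcal R=\emptyset$ versus $\mathcal R\neq\emptyset$, not $|U|\le k+d$ versus $|U|>k+d$. The available deficit is of order $\binom{k}{d}$, not the $\binom{k}{d-1}$ you hope to extract from an extra vertex in Step 3. That step's premise, that most edges lie in one $(k+d)$-set, is itself an unproved stability claim.

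Second, Step 2 cannot work as formulated.
\begin{itemize}
\item A bound $\binom{k}{j}\binom{d}{j}\cdot(\text{factor})$ with factor $>1$ at $j=d$ already overshoots $\binom{k+d}{d}$ by a multiple of $\binom{k}{d}$.
\item F\"uredi's lemma alone gives only $|\F(P)|\le d^{|P|}$ per link. Summed over the $\binom{k}{|P|}$ sets $P$, this is hopeless.
\item Your bookkeeping does not close at $k=30d^2$. Allowing $d^2\binom{k}{d-1}$ for the lower layers exceeds their extremal value $\approx d\binom{k}{d-1}$ by roughly $\frac{d^3}{k}\binom{k}{d}\approx\frac{d}{30}\binom{k}{d}$. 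That is not small, and a deficit of $\binom{k}{d-1}$ cannot pay for it.
\end{itemize}
What you need is that large links occur for few $P$. The paper gets this from a compression theorem: two $\ell$-uniform families with no pseudo-sunflower of size $d+1$ that are cross-$(\ell-r)$-intersecting satisfy $\min\{|\mathcal A|,|\mathcal B|\}\le(8d)^r$, proved by a two-ball count in the Johnson metric. The argument then runs as follows.
\begin{itemize}
\item For $|P|=|P'|=\ell$, $t$-intersection makes $\F(P)$ and $\F(P')$ cross-$(2\ell-d-|P\cap P'|)$-intersecting.
\item Hence the sets $P$ with $|\F(P)|>(8d)^{d-\ell+s}$ form an $(s+1)$-intersecting family, and Wilson's theorem bounds it by $\binom{k-s-1}{\ell-s-1}$.
\item A layer-cake sum then bounds the total excess $\sum_P\bigl(|\F(P)|-\binom{d}{|P|}\bigr)_+$ by $\bigl(\frac{2\lambda}{1-\lambda}+\frac{\lambda^2}{(1-\lambda)^2}\bigr)\binom{k}{d}\le\frac{400}{441}\binom{k}{d}$, where $\lambda=\frac{8d^2}{k-d}\le\frac{8}{29}$. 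This is exactly where $k\ge30d^2$ is used.
\end{itemize}
Since $\frac{400}{441}<\frac{14}{15}$, the excess is strictly smaller than the deficit whenever $\mathcal R\neq\emptyset$. Without an analogue of this compression step and of the $\mathcal G$--$\mathcal R$ cross-intersection, your outline does not prove the theorem.
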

\begin{remark}
	While we have confirmed Frankl's Conjecture 1.3 for $c=30$, the exact threshold is still unknown. We believe it will be of great interest to find it.
\end{remark}

This paper is organized as follows. In Section  \ref{sec2}, we introduce our tools, including F\"{u}redi's pseudo-sunflower method and  Frankl's fixed-edge decomposition. In  Section  \ref{sec3}, we prove our main result.
	
	\section{Preliminaries}\label{sec2}

We describe two ingredients that we need  for our proof in this section, and present several basic results.
	\subsection{Pseudo-sunflowers}
	Our proof is based on F\"{u}redi's pseudo-sunflower  method.
	\begin{definition}
		Let $P_1,\ldots,P_s$ be distinct sets. $\{P_1,\ldots,P_s\}$ is called a pseudo-sunflower with center $C$ and size $s$, if $C$ is a proper subset of $P_1$ and the sets
		$
		P_1\setminus C,\ldots,P_s\setminus C
		$
		are pairwise disjoint.
		
	\end{definition}
	F\"{u}redi \cite{Furedi} (see also \cite{FranklPseudo}) proved the following important result.
	\begin{theorem}[F\"{u}redi \cite{Furedi}, Frankl \cite{FranklPseudo}]\label{thm:furedi}
		Suppose that $s\ge 1$ and  $\mathcal H$ is an $\ell$-uniform hypergraph.  If $\mathcal H$ contains no pseudo-sunflower of size $s+1$, then
		\[
		|\mathcal H|\le s^\ell.
		\]
		
		\end{theorem}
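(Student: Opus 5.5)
We prove Theorem \ref{thm:furedi} by induction on $\ell$. Define $f(\ell)$ to be the maximum size of an $\ell$-uniform family with no pseudo-sunflower of size $s+1$, and aim to show $f(\ell)\le s^\ell$. The base case $\ell=1$ needs some care with the definition. For singletons the center must be a proper subset of $P_1$, so $C=\emptyset$. Any $s+1$ distinct singletons are pairwise disjoint and hence form a pseudo-sunflower of size $s+1$ with center $\emptyset$. Therefore $|\mathcal H|\le s$.

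For the inductive step, the plan is a ``greedy maximal disjoint family'' argument, adapted to pseudo-sunflowers. Fix $\mathcal H$ with no pseudo-sunflower of size $s+1$. First note that $\mathcal H$ itself has no $s+1$ pairwise disjoint edges, since such edges form a pseudo-sunflower with center $\emptyset$. Take a maximal collection of pairwise disjoint edges $F_1,\dots,F_m$, so $m\le s$. Put $X=F_1\cup\dots\cup F_m$, so $|X|=m\ell$. By maximality, every edge meets $X$.

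The refined step uses the fact that pseudo-sunflowers only require disjointness outside a center. For each nonempty proper subset $C\subsetneq H$ with $H\in\mathcal H$, consider the link family $\mathcal H(C)=\{H\setminus C: C\subseteq H\in\mathcal H\}$. This family is $(\ell-|C|)$-uniform. A pseudo-sunflower of size $s+1$ in $\mathcal H(C)$ with center $C'$ lifts to one in $\mathcal H$ with center $C\cup C'$. The exception is when $C'=\emptyset$, which gives $s+1$ sets whose parts outside $C$ are pairwise disjoint. This is still a pseudo-sunflower with center $C$, since $C$ is proper in each set. Hence by induction $|\mathcal H(C)|\le s^{\ell-|C|}$.

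A naive count over all subsets of $X$ loses too much. Instead, I would follow Frankl's ordering trick to charge each edge to a canonical trace. Order the edges and proceed as follows. For each edge $H$, let $C(H)$ be a minimal ``witness'' intersection. Then show that the number of edges with a given trace pattern multiplies out to $s\cdot s^{\ell-1}$.

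Concretely, the aim is the recursion $f(\ell)\le s\cdot f(\ell-1)$. To get it, I would pick a vertex-based decomposition. Let $x$ be the element of a fixed edge, and iterate: take $P_1\in\mathcal H$. Every other edge must fail to extend $\{P_1\}$. The cleanest route is Frankl's proof. Order the ground set linearly. For each edge $H$, and for each $i$, write $H^{(i)}$ for its first $i$ elements. Say $H$ is \emph{$i$-blocked} if the family of edges sharing $H^{(i-1)}$ has at most $s$ distinct $i$-th elements. The pigeonhole/tree count then gives at most $s$ choices at each of the $\ell$ levels, so at most $s^\ell$ leaves.

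The main obstacle is proving that every level branches at most $s$ ways. Suppose some prefix $A=H^{(i-1)}$ has $s+1$ distinct next elements $y_1,\dots,y_{s+1}$. We must then build a pseudo-sunflower with center $A$. The trick is to choose the edges greedily so that their parts beyond $A$ are disjoint. Choose them lexicographically largest, so that the tails lie past all $y_j$. Equivalently, one can do a reverse induction choosing minimal prefixes. I would try an argument on the shadow/trie, pruning subtrees of size $\le s$. An edge whose prefix branches at most $s$ times is counted in a trie of out-degree $\le s$. Edges at high-branching nodes yield the forbidden configuration, with the center being the branching prefix. Tails are made disjoint by taking, at each branch, the deepest descendant and applying induction on $\ell-i$ to the subfamily.
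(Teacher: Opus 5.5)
Your base case and the link bound $|\mathcal H(C)|\le s^{\ell-|C|}$ are correct. However, the recursion $f(\ell)\le s\,f(\ell-1)$ is never established, and the step you single out as the main obstacle is false: a prefix in the trie can have more than $s$ children. Take $s=2$ and $\mathcal H=\{\{1,2\},\{1,3\},\{2,4\},\{3,4\}\}$. Every center has size at most $1$, and checking the four triples shows that $\mathcal H$ has no pseudo-sunflower of size $3$. Yet with the order $1<2<3<4$ the empty prefix has the three children $1,2,3$. So the distinct next elements $y_j$ need not produce a pseudo-sunflower with center $A$.

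No choice of order rescues this. Take two disjoint triangles and $s=3$. Center $\emptyset$ gives at most $2$ disjoint edges, and a center $\{v\}$ gives at most two disjoint traces from $v$'s triangle plus one from the other, so there is no pseudo-sunflower of size $4$. But the minimal elements of the edges always form a vertex cover, so the root has at least $\tau=4>s$ children for every order. The same example rules out any Erd\H{o}s--Lov\'asz-type branching on a cover of size $s$ combined with your link bound, since $\tau(\mathcal H)$ can exceed $s$. The remaining phrases (minimal witness intersections, lexicographically largest tails, deepest descendants) do not amount to an argument.

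The paper does not prove this theorem; it only cites F\"uredi and Frankl. The missing idea is to induct on $s$ through a fixed edge, which is the same decomposition the paper uses in Section 2. Fix $G_0\in\mathcal H$, and for $P\subseteq G_0$ put $\mathcal H(P)=\{H\setminus G_0: H\in\mathcal H,\ G_0\setminus H=P\}$. This is a $|P|$-uniform family in bijection with the edges $H$ satisfying $G_0\setminus H=P$. Suppose $P\neq\emptyset$ and $Q_1,\dots,Q_s\in\mathcal H(P)$ form a pseudo-sunflower with center $C'\subsetneq Q_1$. Then the sets $Q_i\cup(G_0\setminus P)$ together with $G_0$ form a pseudo-sunflower of size $s+1$ in $\mathcal H$ with center $C'\cup(G_0\setminus P)$. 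Indeed, $G_0$ minus this center is $P$, which is disjoint from every $Q_i$. Hence $\mathcal H(P)$ has no pseudo-sunflower of size $s$, and by induction on $s$ we get $|\mathcal H(P)|\le (s-1)^{|P|}$. For $s=1$ this just says $\mathcal H(P)=\emptyset$, since any single nonempty set is a pseudo-sunflower of size $1$. Therefore $|\mathcal H|\le\sum_{P\subseteq G_0}(s-1)^{|P|}=s^\ell$.
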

	
	 \subsection{Frankl's decomposition with fixed-edge}
	Let $\mathcal F$ be a $t$-intersecting, critical $k$-uniform hypergraph and put $d=k-t$.  Fix an edge
	$
	E\in\mathcal F.
	$
	For each $P\subseteq E$, define
	\[
	\mathcal F(P)=\{F\setminus E:F\in\mathcal F,\ E\setminus F=P\}.
	\]
	Note that $|F\setminus E|=|E\setminus F|=k-|F\cap E|$. Then if $|P|=\ell$, every member of $\F(P)$ has size $\ell$. Besides, $|F\cap E|\ge t$ implies that $|E\setminus F|\le k-t=d$. It means that $\mathcal F(P)=\emptyset$ for $|P|>d$. Hence
	\begin{equation}\label{eq:fixed-edge-sum}
		|\mathcal F|
		=
		\sum_{\ell=0}^{d}\sum_{P\in\binom{E}{\ell}} |\mathcal F(P)|.
	\end{equation}
	
	Define the top layer support
	$$
	\mathcal G=
	\left\{G\in\binom{E}{d}:\mathcal F(G)\ne\emptyset\right\}.
	$$
	A set $P\subseteq E$ is called \textit{rich} if
	$
	\left|\bigcup\mathcal F(P)\right|>d.
	$
	Let
$$
	\mathcal R=\{P\subseteq E:P\text{ is rich}\}.
	$$
	If $P$ is not rich and $|P|=\ell$, then all members of $\mathcal F(P)$ lie in a  surrounding set of size at most $d$, so
	\begin{equation}\label{eq:nonrich-bound}
		|\mathcal F(P)|\le \binom{d}{\ell}.
	\end{equation}
	Frankl \cite{FranklCritical} showed the following facts.
	\begin{lemma}[Frankl \cite{FranklCritical}]\label{lem:frankl-facts}
		With the notation above, the following statements hold.
		\begin{enumerate}[(1).]
			\item For every $P\subseteq E$,   $\mathcal F(P)$ contains no pseudo-sunflower of size $d+1$.
			\item For disjoint $D_1,D_2\in\binom{E}{d}$,
			$
			|\mathcal F(D_1)|\,|\mathcal F(D_2)|\le 1.
			$
			\item The families $\mathcal G$ and $\mathcal R$ are cross-intersecting.
			\item If $\mathcal R=\emptyset$, then
			\[
			|\mathcal F|\le \binom{k+d}{d}.
			\]
			Moreover, if $k>2d$, then equality holds only for the complete $k$-graph on $k+d$ vertices.
		\end{enumerate}
	\end{lemma}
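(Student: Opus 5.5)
The lemma has four parts. I would prove each directly from the decomposition $F=(E\setminus P)\cup A$ with $A\in\mathcal F(P)$, writing $S(P)=\bigcup\mathcal F(P)$. The one tool used throughout is the following count: for $F_1=(E\setminus P_1)\cup A_1$ and $F_2=(E\setminus P_2)\cup A_2$ we have $|F_1\cap F_2|=|E\setminus(P_1\cup P_2)|+|A_1\cap A_2|$, because $A_1,A_2$ lie outside $E$.

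\emph{Part (1).} Suppose $P_1,\dots,P_{d+1}\in\mathcal F(P)$ form a pseudo-sunflower with center $C$, and put $T=(E\setminus P)\cup C$. Since $C\subsetneq P_1$, we get $|T|<k-|P|+|P|=k$. I would show that $T$ is a $t$-transversal, which contradicts criticality.
Take any $F\in\mathcal F$ and suppose $x=|F\cap T|\le t-1$. Since $F$ meets each edge $(E\setminus P)\cup P_i$ in at least $t$ vertices, we get $|F\cap(P_i\setminus C)|\ge t-x\ge 1$ for every $i$. The sets $P_i\setminus C$ are pairwise disjoint, lie outside $E$, and miss $C$. Moreover $|F\cap E|\ge t$. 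Hence
\[
k=|F|\ge |F\cap E|+\sum_{i=1}^{d+1}|F\cap(P_i\setminus C)|\ge t+(d+1)=k+1,
\]
a contradiction. So every $F$ meets $T$ in at least $t$ vertices.

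\emph{Parts (2) and (3).} For (2), take $A_i\in\mathcal F(D_i)$. Since $D_1\cap D_2=\emptyset$, the count gives $k-2d+|A_1\cap A_2|\ge k-d$. Hence $|A_1\cap A_2|\ge d$, and since both sets have size $d$, $A_1=A_2$. So if both families are nonempty, every member of $\mathcal F(D_1)$ equals every member of $\mathcal F(D_2)$, and therefore each family has exactly one member.
For (3), suppose $G\in\mathcal G$ and $R\in\mathcal R$ are disjoint. Fix $A\in\mathcal F(G)$. For every $B\in\mathcal F(R)$ the count gives $k-d-|R|+|A\cap B|\ge k-d$, so $|A\cap B|\ge|R|=|B|$, i.e. $B\subseteq A$. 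Then $\bigcup\mathcal F(R)\subseteq A$ has size at most $d$, contradicting that $R$ is rich.

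\emph{Part (4).} If $\mathcal R=\emptyset$, then \eqref{eq:nonrich-bound} applies to every $P$. Combining it with \eqref{eq:fixed-edge-sum} and Vandermonde's identity gives
\[
|\mathcal F|\le\sum_{\ell=0}^{d}\binom{k}{\ell}\binom{d}{\ell}=\binom{k+d}{d}.
\]

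I expect the equality case to be the main obstacle. Equality forces $\mathcal F(P)=\binom{S(P)}{|P|}$ for every $P$, where $|S(P)|=d$.

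For $|P|=d$ each $\mathcal F(P)$ is a single set $S(P)$. The argument of (2) shows $S(D_1)=S(D_2)$ whenever $D_1\cap D_2=\emptyset$. For $k>2d$ the Kneser graph on $\binom{E}{d}$ is connected, so all these sets equal a common $d$-set $S$.

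For $1\le|P|=\ell<d$, I would choose a $d$-set $D\subseteq E\setminus P$; it exists since $k\ge d+\ell$. Take any $B\in\mathcal F(P)$ and compare $(E\setminus P)\cup B$ with $(E\setminus D)\cup S$. The count gives $k-d-\ell+|B\cap S|\ge k-d$, so $B\subseteq S$. Since the $\ell$-subsets of $S(P)$ cover $S(P)$, we get $S(P)\subseteq S$, and equal sizes give $S(P)=S$.

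Consequently $\mathcal F\subseteq\binom{E\cup S}{k}$. As $|\mathcal F|=\binom{k+d}{d}=\bigl|\binom{E\cup S}{k}\bigr|$, we conclude $\mathcal F=\binom{E\cup S}{k}$.
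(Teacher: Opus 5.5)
Your proof is correct in all four parts. The transversal $(E\setminus P)\cup C$ argument in (1) works, including under the paper's weak definition of pseudo-sunflower, since you only use $C\subsetneq P_1$ and disjointness of the sets $P_i\setminus C$. The identity $|F_1\cap F_2|=|E\setminus(P_1\cup P_2)|+|A_1\cap A_2|$ in (2)--(3) is the same computation the paper uses in Lemma~\ref{lem:large-fibres-intersect}. The Vandermonde bound and the Kneser-connectivity argument for the equality case in (4) are also sound.

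There is nothing in the paper to compare against: the paper gives no proof of this lemma and simply quotes it from Frankl. Your write-up therefore supplies a self-contained justification along the natural fixed-edge lines.

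Two cosmetic points. First, $|S(P)|=d$ should be asserted only for $P\neq\emptyset$. Second, you could add the one-line reason for connectivity: two $d$-sets sharing $d-1$ elements have a common disjoint $d$-set once $k\ge 2d+1$, and the Johnson graph is connected.
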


	 \section{Proof of the main theorem}\label{sec3}

In this section, we will give  the proof of  Theorem \ref{thm:main} step by step.
\smallskip

	\textbf{Firstly, we give a kind of compression.}

\smallskip

	For two $\ell$-sets $X,Y$, we define the Johnson   distance between them as
	\[
	\delta(X,Y)=\ell-|X\cap Y|=|X\setminus Y|.
	\]
	Thus $|X\cap Y|\ge \ell-r$ is equivalent to $\delta(X,Y)\le r$.
	\begin{lemma}\label{lem:two-balls}
		Let integers $d\ge 2$, $1\le \rho\le \ell\le d$ and let $\ell$-sets $X, Y$ satisfy
		\[
		\delta(X,Y)=s,
		\quad
		\rho\le s\le 2\rho.
		\]
		Let $\mathcal H$ be an  $\ell$-uniform hypergraph containing no pseudo-sunflower of size $d+1$.  Suppose further that every $H\in\mathcal H$ satisfies
		\[
		\delta(H,X)\le \rho,
		\quad
		\delta(H,Y)\le \rho.
		\]
		Then
		\[
		|\mathcal H|\le (8d)^\rho.
		\]
	\end{lemma}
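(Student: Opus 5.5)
The plan is to split every $H\in\mathcal H$ into its trace on $X\cup Y$ and its part outside $X\cup Y$. The number of possible traces is controlled by the distance conditions, and for each fixed trace the outside parts are controlled by Theorem~\ref{thm:furedi}.

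\textbf{Step 1 (parametrise the trace).} Write $A=X\cap Y$, $B=X\setminus Y$, $C=Y\setminus X$, so that $|A|=\ell-s$ and $|B|=|C|=s$. For $H\in\mathcal H$ introduce the parameters
\begin{itemize}
\item $m=|A\setminus H|$,
\item $u=|B\setminus H|$ and $v=|C\setminus H|$,
\item $a=|H\setminus(X\cup Y)|$.
\end{itemize}
Counting $|H|=\ell$ gives $a=m+u+v-s$. Next, $\delta(H,X)=|X\setminus H|=m+u\le\rho$ and $\delta(H,Y)=m+v\le\rho$, so $u,v\le \rho-m$. Hence
\[
a\le 2\rho-s-m\le \rho-m .
\]
The second inequality uses $s\ge\rho$; this is the only place the hypothesis $s\ge\rho$ is needed. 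The trace $T=H\cap(X\cup Y)$ is determined by the three missing sets $A\setminus H$, $B\setminus H$, $C\setminus H$. So the number of traces with parameters $(m,u,v)$ is at most $\binom{\ell-s}{m}\binom{s}{u}\binom{s}{v}$.

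\textbf{Step 2 (fibres over a fixed trace).} Fix a trace $T$ and let $\mathcal H_T=\{H\setminus T: H\in\mathcal H,\ H\cap(X\cup Y)=T\}$. This is an $a$-uniform family. Suppose $H_1\setminus T,\dots,H_{d+1}\setminus T$ formed a pseudo-sunflower with center $C'$. Then $H_1,\dots,H_{d+1}$ would form a pseudo-sunflower with center $C'\cup T$, which is still a proper subset of $H_1$, contradicting the hypothesis on $\mathcal H$. So Theorem~\ref{thm:furedi} with $s=d$ gives $|\mathcal H_T|\le d^{a}$; this also holds trivially when $a=0$. Therefore
\[
|\mathcal H|\le \sum_{m,u,v}\binom{\ell-s}{m}\binom{s}{u}\binom{s}{v}\,d^{\,m+u+v-s},
\]
where the sum runs over $u,v\le\rho-m$ with $m+u+v\ge s$.

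\textbf{Step 3 (estimation; the main obstacle).} The crude bounds
\[
\binom{s}{u}\binom{s}{v}\le 4^s\le 16^\rho
\quad\text{and}\quad
d^{a}\le d^{\rho-m}
\]
give only about $e\,16^\rho d^\rho$, which is too weak. The sharper route is to keep the stronger exponent bound $a\le 2\rho-s-m$. Using $\sum_m\binom{\ell-s}{m}d^{-m}\le (1+1/d)^d\le e$, the sum becomes at most
\[
e\cdot 4^s d^{2\rho-s}=e\,(4d)^\rho\,(4/d)^{s-\rho}.
\]
For $d\ge4$ this is at most $e(4d)^\rho\le(8d)^\rho$ when $\rho\ge2$.

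The delicate cases are small $d$ (namely $d=2,3$, where $4/d>1$) and $\rho=1$. For these I would not bound $\sum_{u,v}\binom su\binom sv$ by $4^s$. Instead I would sum it exactly under the constraints $u,v\le\rho-m$ and $u+v\ge s-m$; these force $u,v\ge s-\rho$, so only about $(2\rho-s+1)^2$ terms survive. I would then check the resulting finite inequalities directly, using $s\le 2\rho\le 2d$.

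This constant-chasing is where I expect the difficulty to lie. If the factor $8$ turns out to be too tight for some small case, I would refine the fibre count further by noting that $\mathcal H_T$ lives outside $X\cup Y$ and applying the pseudo-sunflower bound jointly over traces sharing the same $A$-part.
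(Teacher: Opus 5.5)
\textbf{This is a genuine gap, in the estimation step.} Your Steps 1--2 match the paper's decomposition: your $m$ and $a$ are its $\alpha$ and $\zeta$, and $s-u$, $s-v$ are its $|I|$, $|J|$. The fibre bound via Theorem~\ref{thm:furedi} is also correct. The problem is Step 3. The estimate you actually prove, $e\cdot 4^s d^{2\rho-s}$, does not yield $(8d)^\rho$ in two regimes:
\begin{itemize}
\item when $\rho=1$: for $s=1$ it equals $4e\,d>8d$, and this happens for every $d$;
\item when $d\in\{2,3\}$: for $d=2$ and $s=2\rho$ it equals $e\cdot 16^\rho>(8d)^\rho$.
\end{itemize}
For these cases you only announce a check, and you even allow that the constant $8$ might fail. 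The case $\rho=1$ ranges over all $d$, so it is not a finite verification. Since the content of the lemma is precisely the constant, leaving this open leaves the lemma unproved.

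\textbf{The missing observation: your sum over $m$ is truncated.} From $0\le a\le 2\rho-s-m$ you get $m\le 2\rho-s$. Summing $\binom{\ell-s}{m}d^{-m}$ over all $m$ to obtain the factor $e$ throws away exactly what is needed. In the tight regime $2\rho-s\in\{0,1\}$, only one or two values of $m$ actually occur.

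\textbf{The fix.} Instead of the binomial series, use $\binom{\ell-s}{m}\le d^m$, $d^a\le d^{2\rho-s-m}$, and $\sum_{u,v}\binom su\binom sv\le 4^s$. Then each admissible $m$ contributes at most $4^s d^{2\rho-s}$. Put $j:=2\rho-s\in[0,\rho]$; there are $j+1$ admissible values of $m$. This gives
\[
|\mathcal H|\le (j+1)\,4^{2\rho-j}d^{j},\qquad \frac{(j+1)\,4^{2\rho-j}d^{j}}{(8d)^\rho}=(j+1)\,2^{\rho-2j}d^{j-\rho}\le (j+1)\,2^{-j}\le 1,
\]
using $d\ge 2$ and $j\le\rho$. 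This is the paper's argument. It covers all $d\ge 2$ and $\rho\ge 1$ uniformly, with no case analysis and no need for the joint-fibre refinement you sketch at the end.
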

	\begin{proof}
		Put
		\[
		C=X\cap Y,
		\qquad
		X_0=X\setminus Y,
		\qquad
		Y_0=Y\setminus X.
		\]
		Then $|X_0|=|Y_0|=s$ and $|C|=\ell-s$.
		
		Let $U=C\setminus H$. For $H\in\mathcal H$, we can decompose $H$ into four parts:
		\[
		W=C\setminus U=C\cap H,
		\qquad
		I=H\cap X_0,
		\qquad
		J=H\cap Y_0,
		\qquad
		Z=H\setminus (X\cup Y).
		\]
		Denote
		\[
		\alpha:=|U|,
		\quad
		\beta:=|I|,
		\quad
		\gamma:=|J|,
		\quad
		\zeta:=|Z|.
		\]
		Note that
		$
		H=W\cup I\cup J\cup Z
		$
		and $|H|=\ell$, we have $\ell-s-\alpha+\beta+\gamma+\zeta=\ell$, that is
		\begin{equation}\label{eq:parts-size}
			\beta+\gamma+\zeta=s+\alpha.
		\end{equation}
		Moreover,
		$
		|H\cap X|=|W|+|I|=\ell-s-\alpha+\beta.
		$
		Therefore
		$
		\delta(H,X)=s+\alpha-\beta\le \rho,
		$
		so
		\begin{equation}\label{eq:beta-bound}
			\beta\ge s+\alpha-\rho.
		\end{equation}
		Similarly,
		\begin{equation}\label{eq:gamma-bound}
			\gamma\ge s+\alpha-\rho.
		\end{equation}
		Combining \eqref{eq:parts-size}, \eqref{eq:beta-bound}, and \eqref{eq:gamma-bound}, we obtain
		\[
		s+\alpha-\zeta
		=\beta+\gamma
		\ge 2(s+\alpha-\rho).
		\]
		Thus
		\begin{equation}\label{eq:free-bound}
			\alpha+\zeta\le 2\rho-s.
		\end{equation}
		
		Now we count.  For fixed $U,I,J$, $H$ is determined only by $Z$.  The possible $Z$ form a $\zeta$-uniform family containing no pseudo-sunflower of size $d+1$. Otherwise suppose $\{Z_0,Z_1,\dots,Z_d\}$ is a pseudo-sunflower with center $M$, then the corresponding $\{H_0,H_1,\dots,H_d\}$ where $H_i=Z_i\cup W\cup I\cup J$ would form a pseudo-sunflower with center $M\cup W\cup I\cup J$.  By Theorem \ref{thm:furedi}, the number of possible $Z$'s is at most $d^\zeta\le d^{2\rho-s-\alpha}$.
		
		Note that the pair $(I,J)$ has at most $2^s2^s=4^s$ choices.  By \eqref{eq:free-bound}, $\alpha\le 2\rho-s$, and for fixed $\alpha$ the set $U\subseteq C$ has at most
		$
		\binom{\ell-s}{\alpha}\le\binom{\ell}{\alpha}\le \ell^\alpha\le d^\alpha
		$
		choices.  Therefore
		\[
		|\mathcal H|
		\le
		4^s\sum_{\alpha=0}^{2\rho-s} d^\alpha d^{2\rho-s-\alpha}
		=
		4^s(2\rho-s+1)d^{2\rho-s}.
		\]
		Denote
		$
		m:=2\rho-s.
		$
		Then $0\le m\le \rho$ and the last expression is
		$
		4^{2\rho-m}(m+1)d^m.
		$
		We compare it to $(8d)^\rho$.  Their ratio is
		\[
		\frac{4^{2\rho-m}(m+1)d^m}{(8d)^\rho}
		=
		(m+1)2^{\rho-2m}d^{m-\rho}.
		\]
		Since $d\ge 2$ and $m\le \rho$,
		$
		d^{m-\rho}\le 2^{m-\rho}.
		$
		Hence the ratio is at most
		$
		(m+1)2^{-m}\le 1.
		$
		Therefore
		\[
		|\mathcal H|\le (8d)^\rho.
		\]
	\end{proof}
	The \emph{diameter} of $\mathcal{H}$ is defined as $ diam(\mathcal{H})=\max\left\{\delta(H,H'):H,H'\in \mathcal{H}\right\}$.
	\begin{corollary}\label{cor:small-diameter}
		Suppose that $d\ge 2$, $\ell\le d$. Let $\mathcal H$ be an $\ell$-uniform hypergraph containing no pseudo-sunflower of size $d+1$.  If
		$
		diam(\mathcal{H})\le r,
	$
		then
		$
		|\mathcal H|\le (8d)^r.
		$
	\end{corollary}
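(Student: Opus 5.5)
The plan is to reduce the corollary to Lemma \ref{lem:two-balls} by choosing the two reference sets $X,Y$ inside $\mathcal H$ itself. If $\mathcal H=\emptyset$ there is nothing to prove. If $r=0$, then every pair of members has distance $0$, so $|\mathcal H|\le 1=(8d)^0$. We may also assume $r\le \ell$, since distances between $\ell$-sets never exceed $\ell$; if $r>\ell$, we apply the argument with $r$ replaced by $\ell$ and note that $(8d)^\ell\le (8d)^r$. So from now on $1\le r\le \ell\le d$.

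Let $s=diam(\mathcal H)\le r$, and pick $X,Y\in\mathcal H$ with $\delta(X,Y)=s$. Every $H\in\mathcal H$ then satisfies $\delta(H,X)\le s$ and $\delta(H,Y)\le s$. The case split is on $s$:
\begin{itemize}
\item If $s\ge 1$, we take $\rho=s$. The hypotheses of Lemma \ref{lem:two-balls} hold: $1\le \rho\le \ell\le d$, $d\ge 2$, and $\rho\le s\le 2\rho$. The lemma gives $|\mathcal H|\le (8d)^s\le (8d)^r$, because $8d\ge 1$.
\item If $s=0$, then $|\mathcal H|\le 1$, and the bound is trivial.
\end{itemize}
Either way the conclusion follows.

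There is no real obstacle in this argument. The one point needing care is to take $\rho=s$, the actual diameter, rather than $\rho=r$. With $\rho=r$ the lemma's requirement $\rho\le s$ could fail when $s<r$; choosing $\rho=s$ satisfies it automatically, and the monotonicity of $(8d)^\rho$ in $\rho$ then recovers the stated bound.
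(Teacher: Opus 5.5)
Your proposal is correct and follows the paper's proof: pick $X,Y\in\mathcal H$ realizing the diameter $s$, apply Lemma~\ref{lem:two-balls} with $\rho=s$, and use $(8d)^s\le(8d)^r$. Your extra handling of $r=0$, $r>\ell$, and $s=0$ is harmless; the paper covers all of these at once by treating $|\mathcal H|\le 1$ as trivial.
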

	\begin{proof}
		The case $|\mathcal H|=1$ is trivial.  Suppose that $|\mathcal H|\ge2$, we can choose $X,Y\in\mathcal H$ with
		\[
		s=\delta(X,Y)=diam(\mathcal H).
		\]
		Then $1\le s\le r$.  Every $H\in\mathcal H$ satisfies
		$
		\delta(H,X)\le s,
		$ $
		\delta(H,Y)\le s.
		$
	We can get
		\[
		|\mathcal H|\le (8d)^s\le (8d)^r
		\] by applying Lemma \ref{lem:two-balls} with $\rho=s$.
	\end{proof}
	\begin{theorem}\label{thm:compression}
		Suppose that $d\ge 2$, $\ell\le d$. Let $\mathcal{A}$, $\mathcal{B}$ be $\ell$-uniform hypergraphs.
		Assume that both $\mathcal A$ and $\mathcal B$ contain no pseudo-sunflower of size $d+1$.  If $\mathcal A$ and $\mathcal B$ are cross-$(\ell-r)$-intersecting,
		then
		\[
		\min\{|\mathcal A|,|\mathcal B|\}\le (8d)^r.
		\]
	\end{theorem}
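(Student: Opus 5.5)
The idea is to reduce everything to Lemma \ref{lem:two-balls} and Corollary \ref{cor:small-diameter}, splitting into cases according to the diameter of $\mathcal A$. Cross-$(\ell-r)$-intersection says exactly that $\delta(A,B)\le r$ for all $A\in\mathcal A$ and $B\in\mathcal B$.

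\emph{Trivial cases.} If either family is empty, the bound is immediate. If $r\ge \ell$, then every family of $\ell$-sets has diameter at most $\ell$. Corollary \ref{cor:small-diameter} then gives $|\mathcal A|\le (8d)^\ell\le (8d)^r$. If $r\le 0$, cross-$\ell$-intersection forces $\mathcal A=\mathcal B=\{A\}$ for a single set $A$, so the minimum is $1$. We may therefore assume $1\le r\le \ell-1$ and that both families are nonempty.

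\emph{Small diameter.} Suppose $diam(\mathcal A)\le r$. Corollary \ref{cor:small-diameter}, applied to $\mathcal A$, gives $|\mathcal A|\le (8d)^r$ and we are done.

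\emph{Large diameter.} Otherwise choose $X,Y\in\mathcal A$ with $s=\delta(X,Y)>r$. Fix any $B\in\mathcal B$. The Johnson distance $\delta$ is a metric on $\ell$-sets, being half the size of the symmetric difference. Hence
\[
s=\delta(X,Y)\le \delta(X,B)+\delta(B,Y)\le 2r,
\]
so $r\le s\le 2r$. Moreover, every $B\in\mathcal B$ satisfies $\delta(B,X)\le r$ and $\delta(B,Y)\le r$. We now apply Lemma \ref{lem:two-balls} with $\mathcal H=\mathcal B$ and $\rho=r$. Its hypotheses $d\ge2$, $1\le\rho\le\ell\le d$, $\rho\le s\le 2\rho$ and the absence of pseudo-sunflowers of size $d+1$ in $\mathcal B$ all hold. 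It yields $|\mathcal B|\le (8d)^r$.

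In every case $\min\{|\mathcal A|,|\mathcal B|\}\le (8d)^r$. The only point needing care is checking the side conditions of Lemma \ref{lem:two-balls}: the triangle inequality supplies the upper bound $s\le 2r$, and the case split on the diameter supplies the lower bound $s\ge r$. There is no real obstacle beyond this bookkeeping.
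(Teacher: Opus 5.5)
Your proposal is correct and is essentially the paper's proof: split on whether $\mathrm{diam}(\mathcal A)\le r$, use Corollary \ref{cor:small-diameter} in that case, and otherwise pick $X,Y\in\mathcal A$ at distance $s>r$, get $s\le 2r$ from the triangle inequality, and apply Lemma \ref{lem:two-balls} to $\mathcal B$ with $\rho=r$. Your separate treatment of $r\ge \ell$ is harmless but redundant. In the large-diameter branch, $r<s\le \ell$ already guarantees the side condition $\rho\le\ell$, and for $r\ge\ell$ one is automatically in the small-diameter case.
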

	
	\begin{proof}
		For $r=0$, $\mathcal A$ and $\mathcal B$ are cross-$\ell$-intersecting, implying that every $A\in\mathcal A$ and $B\in\mathcal B$ must be equal. Then $|\mathcal{A}|\le 1$ and $|\mathcal{B}|\le 1$.  So we may assume $r\ge 1$. Furthermore, the case when $\mathcal{B}$ is empty is trivial. Then we can further assume $\mathcal{B}$ is nonempty.
		
		If ${\rm{diam}} (\mathcal A)\le r$, then Corollary \ref{cor:small-diameter} gives that
		$
		|\mathcal A|\le (8d)^r.
		$
		Otherwise we can choose $A_1,A_2\in\mathcal A$ with
		$
		s=\delta(A_1,A_2)>r.
	$
		For every $B\in\mathcal B$, the assumption $\mathcal{A}$ and $\mathcal{B}$  are cross-$(\ell-r)$-intersecting gives that
	$$
		\delta(B,A_1)=\ell-|B\cap A_1|\le \ell-(\ell-r)=r,
		$$
		similarly $\delta(B,A_2)\le r$.
		Then the triangle inequality for the Johnson distance implies $s\le 2r$.  Thus
		\[
		r<s\le 2r.
		\]
		Applying Lemma \ref{lem:two-balls} to $\mathcal B$ with $X=A_1$, $Y=A_2$, and $\rho=r$, we obtain
		\[
		|\mathcal B|\le (8d)^r.
		\]
		This proves the theorem.
	\end{proof}
	
\smallskip

\textbf{Secondly, we consider the case when $|\mathcal F(P)|$ is large.}

\medskip

	Put $M(P)=|\mathcal F(P)|.
	$ By Lemma \ref{lem:frankl-facts} and Theorem \ref{thm:furedi}, for $|P|=\ell$ we have that $\mathcal F(P)$ contains no pseudo-sunflower of size $\ell+1$ and
	\begin{equation}\label{eq:MP-rough}
		M(P)\le d^\ell.
	\end{equation}
	Denote
	$
	L_r:=(8d)^r
	$ for $r\ge0$, $a=d-\ell$.
	For a fixed layer $P$ with
	$
	|P|=\ell=d-a
	$
, $s\ge 0$, define
	\[
	\mathcal H_{\ell,s}
	=
	\left\{P\in\binom{E}{\ell}:M(P)>L_{a+s}\right\}.
	\]
	\begin{lemma}\label{lem:large-fibres-intersect}
		For $0\le s\le \ell-1$, the family $\mathcal H_{\ell,s}$ is $(s+1)$-intersecting.
	\end{lemma}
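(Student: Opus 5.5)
The plan is to argue by contradiction and reduce to Theorem \ref{thm:compression}. Take $P,Q\in\mathcal H_{\ell,s}$. If $P=Q$ then $|P\cap Q|=\ell\ge s+1$, since $s\le \ell-1$, so there is nothing to prove. So assume $P\ne Q$ and, for contradiction, $|P\cap Q|\le s$. We will show that $\mathcal F(P)$ and $\mathcal F(Q)$ are cross-$(\ell-r)$-intersecting with $r=a+s$. Theorem \ref{thm:compression} then forces $\min\{M(P),M(Q)\}\le L_{a+s}$, contradicting the definition of $\mathcal H_{\ell,s}$.

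\textbf{Key computation.} Let $A\in\mathcal F(P)$ and $B\in\mathcal F(Q)$. They come from edges $F_1=(E\setminus P)\cup A$ and $F_2=(E\setminus Q)\cup B$ of $\mathcal F$, where $A,B$ are disjoint from $E$. Hence
\[
|F_1\cap F_2| = |E\setminus(P\cup Q)| + |A\cap B| = k-2\ell+|P\cap Q|+|A\cap B|.
\]
Since $\mathcal F$ is $t$-intersecting, $|F_1\cap F_2|\ge t=k-d$. Therefore
\[
|A\cap B|\ \ge\ 2\ell-d-|P\cap Q|\ \ge\ \ell-a-s=\ell-(a+s),
\]
using $d=\ell+a$. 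So the two families are cross-$(\ell-r)$-intersecting with $r=a+s$.

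\textbf{Applying the compression.} Both $\mathcal F(P)$ and $\mathcal F(Q)$ are $\ell$-uniform with $\ell\le d$. By Lemma \ref{lem:frankl-facts}(1), neither contains a pseudo-sunflower of size $d+1$. So if $d\ge 2$, Theorem \ref{thm:compression} gives $\min\{M(P),M(Q)\}\le (8d)^{a+s}=L_{a+s}$, which is the desired contradiction.

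\textbf{Boundary cases.} These are the only real obstacle; the rest is bookkeeping.
\begin{enumerate}
\item If $r=a+s\ge \ell$, the cross-intersection condition is vacuous and should not be relied on. Instead, \eqref{eq:MP-rough} gives $M(P)\le d^\ell\le (8d)^{\ell}\le L_{a+s}$, so $\mathcal H_{\ell,s}=\emptyset$ and the statement holds trivially.
\item If $d=1$, then $\ell=1$, $s=0$ and $a=0$. Now $M(P)\le d^\ell=1=L_0$, so again $\mathcal H_{\ell,s}$ is empty.
\end{enumerate}

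I expect no difficulty beyond checking that the $E$-parts and the outside parts of $F_1,F_2$ contribute separately to the intersection. This holds because $A,B\subseteq V\setminus E$.
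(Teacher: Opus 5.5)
Your proof is correct and follows the paper's argument. You use the same computation $|F_1\cap F_2|=|E\setminus(P\cup Q)|+|A\cap B|$ to get cross-$(\ell-(a+s))$-intersection of the fibres, apply Theorem \ref{thm:compression} for the contradiction, and dispose of large $a+s$ with the rough bound \eqref{eq:MP-rough}. The only cosmetic difference is that the paper handles $a=s=0$ via Lemma \ref{lem:frankl-facts}(2), whereas you absorb it into the $r=0$ case of Theorem \ref{thm:compression} and treat $d=1$ separately with the rough bound.
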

	\begin{proof}
		Firstly, suppose that $a=s=0$.  Then $\ell=d$.  For disjoint $P,P'\in\mathcal H_{d,0}$, note that
		\[
		M(P)>L_0=1,
		\qquad
		M(P')>L_0=1,
		\]
		contradicting Lemma \ref{lem:frankl-facts}(2).  Hence $\mathcal H_{d,0}$ is intersecting.
		
		Now assume $a+s\ge 1$.  Suppose that there are $P,P'\in\mathcal H_{\ell,s}$ satisfying
		$
		|P\cap P'|\le s.
		$
		Note that $\mathcal F(P)$ and $\mathcal F(P')$ are both nonempty. Then take arbitrary
		$
		Q\in\mathcal F(P)
		$, $
		Q'\in\mathcal F(P').
		$
		The corresponding edges of $\mathcal F$ are
		$
		F=Q\cup(E\setminus P),
		$ $
		F'=Q'\cup(E\setminus P').
		$
		Since $\mathcal F$ is $t$-intersecting and $t=k-d$,
		$
		|F\cap F'|\ge k-d.
		$
		Note that $Q$, $Q'$ are both disjoint with $E$, then
		\[
		|F\cap F'|
		=
		|Q\cap Q'|+|E\setminus(P\cup P')|.
		\]
		Since $|E\setminus(P\cup P')|=k-|P\cup P'|$,
		\[
		|Q\cap Q'|
		\ge
		k-d-(k-|P\cup P'|)=|P\cup P'|-d.
		\]
		Using $|P\cup P'|=2\ell-|P\cap P'|\ge 2\ell-s$ and $\ell=d-a$, we get
		\[
		|Q\cap Q'|
		\ge
		2\ell-s-d
		=\ell+(d-a)-s-d=
		\ell-(a+s).
		\]
		Thus the two fibers $\mathcal F(P)$ and $\mathcal F(P')$ are cross-$(\ell-(a+s))$-intersecting.
		
		If $a+s>\ell$, then by \eqref{eq:MP-rough},
		$
		M(P)\le d^\ell<L_{a+s},
		$
		contradicting $P\in\mathcal H_{\ell,s}$.  If $a+s\le \ell$.  Theorem \ref{thm:compression} gives
		$
		\min\{M(P),M(P')\}\le L_{a+s},
		$
		contradicting $P,P'\in\mathcal H_{\ell,s}$.  This proves the lemma.
	\end{proof}
	Recall Wilson's exact version of the Erd\H{o}s--Ko--Rado theorem \cite{EKR,Wilson}.
	
	\begin{theorem}[Wilson \cite{Wilson}]\label{thm:wilson}
		Let $\mathcal A\subseteq\binom{[n]}{\ell}$ be $q$-intersecting.  If
		\[
		n\ge (q+1)(\ell-q+1),
		\]
		then
		\[
		|\mathcal A|\le \binom{n-q}{\ell-q}.
		\]
	\end{theorem}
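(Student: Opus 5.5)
We prove Theorem~\ref{thm:wilson} by the Delsarte--Hoffman ratio bound in the Johnson scheme, following Wilson, with the roles of $k,t$ played by $\ell,q$. Let $N=\binom{n}{\ell}$. Let $\Gamma$ be the graph on $\binom{[n]}{\ell}$ in which $A\sim B$ iff $|A\cap B|<q$. A $q$-intersecting family $\mathcal A$ is exactly an independent set of $\Gamma$. It suffices to construct a real symmetric matrix $M$ with the following properties:
\begin{itemize}
\item $M$ is supported on the edges of $\Gamma$, i.e.\ $M_{A,B}=0$ whenever $|A\cap B|\ge q$;
\item $M$ has constant row sum $\lambda$, so $\mathbf 1$ is an eigenvector;
\item every other eigenvalue of $M$ is at least $-\tau$, with $\tau>0$.
\end{itemize}
The weighted Hoffman bound then gives $|\mathcal A|\le N\tau/(\lambda+\tau)$. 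We aim to tune $M$ so that this equals $\binom{n-q}{\ell-q}$.

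\textbf{Construction of $M$.} Take $M$ in the Bose--Mesner algebra of the Johnson scheme. Concretely, set $M=\sum_{i=0}^{q-1}c_iA_i$, where $A_i$ is the $0/1$ matrix of the relation $|A\cap B|=i$. It is convenient to rewrite this in terms of the inclusion matrices $W_{j\ell}$ (rows indexed by $j$-sets, columns by $\ell$-sets). The matrices $W_{j\ell}^{T}W_{j\ell}$, $0\le j\le \ell$, span the same algebra. Their spectra are explicit: on the eigenspace $V_m$ (of dimension $\binom nm-\binom n{m-1}$) the eigenvalue of $W_{j\ell}^TW_{j\ell}$ is $\binom{\ell-m}{j-m}\binom{n-j-m}{\ell-j}$ for $m\le j$, and $0$ for $m>j$. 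Following Wilson, choose
\[
M=\sum_{i=0}^{q-1}(-1)^{q-1-i}\binom{\ell-1-i}{\ell-q}^{-1}\binom{n-\ell-q+i}{\ell-q}^{-1}\, W_{i\ell}^{T}W_{i\ell}\;+\;\text{(a multiple of } J\text{)}.
\]
The coefficients are chosen so that two things happen. First, the entries with $|A\cap B|\ge q$ cancel. This is an alternating-sum identity for $\sum_{j}(-1)^{\cdot}\binom{|A\cap B|}{j}(\cdots)$ and is checked by a binomial inversion. Second, on each of $V_1,\dots,V_q$ the eigenvalue of $M$ is the same number $-\tau$. The value of $\tau$ then makes the ratio $N\tau/(\lambda+\tau)$ equal to $\binom{n-q}{\ell-q}$. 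Both facts are finite binomial computations using the spectrum above.

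\textbf{Main obstacle.} The hard step is to show that on $V_m$ for $q<m\le\ell$ the eigenvalue of $M$ is $\ge-\tau$. On these spaces only the terms $W_{i\ell}^TW_{i\ell}$ with $i\ge m$ contribute, but $M$ involves only $i<q<m$. So the eigenvalue comes solely from the $J$-correction and the lower-order expansion, and must be compared with $-\tau$. I would first express the eigenvalue of $M$ on $V_m$ as a ratio $\theta_m/\tau$ via the Eberlein polynomials $E_m(i)$. I would then show by a ratio argument in $m$ that $|\theta_m|\le\tau$. The inequality $|\theta_{q+1}|\le\tau$ should reduce to $n\ge(q+1)(\ell-q+1)$ after clearing binomials. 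For larger $m$, the successive ratios $\theta_{m+1}/\theta_m$ are at most $1$ in absolute value under the same hypothesis.

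\textbf{Alternative route.} If this sign analysis becomes unmanageable, the fallback is combinatorial: shift $\mathcal A$ to a left-compressed $q$-intersecting family, which is possible since shifting preserves the $q$-intersecting property. Then apply the Ahlswede--Khachatrian comparison of the families $\mathcal F_r=\{A:|A\cap[q+2r]|\ge q+r\}$. Under $n\ge(q+1)(\ell-q+1)$, $\mathcal F_0$ is the largest of these, which again gives $\binom{n-q}{\ell-q}$.
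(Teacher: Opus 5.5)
The paper gives no proof of this statement: it quotes it from Wilson and uses it as a black box, so your proposal has to stand on its own. Your overall strategy (a ratio bound in the Johnson scheme) is the right kind of argument, but the explicit matrix you write down cannot work.

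The $(A,B)$-entry of $W_{i\ell}^{T}W_{i\ell}$ is $\binom{|A\cap B|}{i}$. So every $M=\sum_{i<q}c_iW_{i\ell}^{T}W_{i\ell}+cJ$ has entries $p(|A\cap B|)$ for one polynomial $p$ of degree at most $q-1$. Requiring $M_{A,B}=0$ for $|A\cap B|=q,\dots,\ell$ gives $\ell-q+1$ roots. This forces $p\equiv 0$ as soon as $\ell\ge 2q-1$, so no binomial inversion can produce the cancellation you claim. Already for $q=1$ your $M$ is a multiple of $J$, whereas the matrix actually needed is the Kneser adjacency matrix $A_0$.

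The spectral side shows the same error. By the very formula you quote, $W_{i\ell}^{T}W_{i\ell}$ vanishes on $V_m$ for $m>i$. Hence your $M$ has eigenvalue $0$ on $V_q$, and on every $V_m$ with $m\ge q$. This contradicts your requirement that it equal $-\tau<0$ on $V_1,\dots,V_q$. Moreover, the characteristic vector of a $q$-star has a nonzero $V_q$-component, so a matrix vanishing on $V_q$ can never give the tight bound $\binom{n-q}{\ell-q}$. This is also why your ``main obstacle'' looked strange: for your $M$ the eigenvalues on $V_m$, $m>q$, are simply $0$.

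The space you named first, $\mathrm{span}\{A_0,\dots,A_{q-1}\}$, is the correct one. It is not $\mathrm{span}\{W_{i\ell}^{T}W_{i\ell}:i<q\}$. A convenient basis is $W_{\ell-i,\ell}^{T}\overline{W}_{\ell-i,\ell}$, $0\le i\le q-1$, where $\overline{W}$ is the disjointness matrix. Its $(A,B)$-entry $\binom{|A\setminus B|}{\ell-i}$ vanishes term by term whenever $|A\cap B|>i$; matrices of this kind are what Wilson's construction uses.

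These matrices have nonzero eigenvalues on $V_m$ for all $m\le \ell-i$. So after fixing coefficients that make the eigenvalue $-\tau$ on $V_1,\dots,V_q$, you must genuinely prove that the eigenvalues on $V_{q+1},\dots,V_\ell$ are at least $-\tau$. This is exactly where $n\ge (q+1)(\ell-q+1)$ is used, and it is the technical heart of Wilson's paper. Your proposal only asserts that this ``should reduce'' to the hypothesis, so even with the matrix repaired the key step is missing.

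The fallback does not close the gap either. Comparing the sizes $|\mathcal F_r|$ is easy. But the claim that a shifted $q$-intersecting family is no larger than $\max_r|\mathcal F_r|$ is the Ahlswede--Khachatrian complete intersection theorem, a considerably stronger result. Invoking it is a citation, not a proof — which is, to be fair, how the paper itself treats Wilson's theorem.
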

	For $q=s+1$ and $n=k$,  Theorem \ref{thm:wilson} implies the following bound.
	\begin{corollary}\label{cor:H-bound}
		Suppose that $k\ge 30d^2$ and $d\ge 3$.  Then, for $0\le s\le \ell-1\le d-1$,
		\[
		|\mathcal H_{\ell,s}|
		\le
		\binom{k-s-1}{\ell-s-1}.
		\]
	\end{corollary}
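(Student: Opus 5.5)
The corollary is a direct application of Theorem \ref{thm:wilson}. Lemma \ref{lem:large-fibres-intersect} tells us that, for $0\le s\le \ell-1$, the family $\mathcal H_{\ell,s}\subseteq\binom{E}{\ell}$ is $(s+1)$-intersecting. Since $|E|=k$, we may identify $E$ with $[k]$. We then apply Theorem \ref{thm:wilson} with $n=k$ and $q=s+1$. This is legitimate because $q=s+1\le \ell$, so the uniformity is at least the intersection parameter.

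The only thing to check is Wilson's threshold $n\ge (q+1)(\ell-q+1)$, which here reads
\[
k\ge (s+2)(\ell-s).
\]
For fixed $\ell$, the product $(s+2)(\ell-s)$ is a concave quadratic in $s$. Its maximum over real $s$ is $\bigl(\tfrac{\ell+2}{2}\bigr)^2$, attained at $s=(\ell-2)/2$. Since $\ell\le d$, this gives
\[
(s+2)(\ell-s)\le \frac{(d+2)^2}{4}.
\]
For $d\ge 3$ we have $d+2\le 2d$, so this is at most $d^2$. A cruder bound also suffices: $(s+2)(\ell-s)\le (d+1)d\le 2d^2$. Either way the threshold is far below $30d^2\le k$, so the hypothesis of Theorem \ref{thm:wilson} holds.

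Theorem \ref{thm:wilson} then gives
\[
|\mathcal H_{\ell,s}|\le \binom{k-(s+1)}{\ell-(s+1)}=\binom{k-s-1}{\ell-s-1},
\]
which is the claimed bound. There is no real obstacle. The only care needed is confirming $q\le \ell$, which follows from $s\le \ell-1$, so the binomial coefficient is well defined, and verifying the threshold inequality uniformly over the allowed range of $s$ and $\ell$. The assumption $d\ge 3$ is not essential to this step; the bound $(d+1)d\le 30d^2$ already holds for all $d\ge1$.
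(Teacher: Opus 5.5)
Your proposal is correct and matches the paper's proof: you invoke Lemma \ref{lem:large-fibres-intersect} to get that $\mathcal H_{\ell,s}$ is $(s+1)$-intersecting, then apply Wilson's theorem with $n=k$ and $q=s+1$ after checking $(s+2)(\ell-s)\le (d+1)d<30d^2\le k$. Your sharper concavity bound is valid but not needed. Your remark that $d\ge 3$ is inessential is right for the threshold check, but $d\ge 2$ is still used implicitly through Lemma \ref{lem:large-fibres-intersect}, which rests on Theorem \ref{thm:compression}.
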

	\begin{proof}
		By Lemma \ref{lem:large-fibres-intersect}, $\mathcal H_{\ell,s}$ is $(s+1)$-intersecting.
		Since $\ell\le d$ and $0\le s\le \ell-1$, we have
		\[
		(s+2)(\ell-s)
		\le
		(d+1)d
		<30d^2
		\le k.
		\]
		The result immediately follows from Theorem \ref{thm:wilson}.
	\end{proof}
\medskip

\textbf{Thirdly, we need some estimations.}

\medskip

	Define
	\begin{equation}\label{eq:X-def}
		X=
		\sum_{\ell=0}^{d}
		\sum_{P\in\binom{E}{\ell}}
		\left(M(P)-\binom{d}{\ell}\right)_+.
	\end{equation}
	Then, for rich sets, we have
	\begin{equation}\label{eq:rich-le-X}
		\sum_{P\in\mathcal R}
		\left(M(P)-\binom{d}{|P|}\right)_+
		\le X.
	\end{equation}
	\begin{lemma}\label{lem:layer-cake}
		Suppose that $d\ge 3$, $k\ge 30d^2$, and denote
	$
		\lambda:=\frac{8d^2}{k-d}.
	$
		Then we have
		\[
		X
		\le
		\left(
		\frac{2\lambda}{1-\lambda}
		+
		\frac{\lambda^2}{(1-\lambda)^2}
		\right)
		\binom{k}{d}.
		\]
		Consequently,
		\[
		X\le \frac{400}{441}\binom{k}{d}.
		\]
	\end{lemma}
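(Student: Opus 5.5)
The plan is a layer-cake decomposition of each excess $\bigl(M(P)-\binom{d}{\ell}\bigr)_+$ along the thresholds $L_{a+s}$. This charges every large fiber to the families $\mathcal H_{\ell,s}$, which Corollary \ref{cor:H-bound} controls. Each resulting term should then be at most $\lambda^{j}\binom{k}{d}$ for a suitable exponent $j\ge 1$.

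\textbf{Step 1: decomposing a single excess.} Fix $P$ with $|P|=\ell=d-a$. I would write
\[
M(P)\le L_a+\sum_{s\ge 0}\bigl(L_{a+s+1}-L_{a+s}\bigr)\mathbf 1[P\in\mathcal H_{\ell,s}],
\]
which holds because $M(P)>L_{a+s}$ exactly when $P\in\mathcal H_{\ell,s}$.

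The sum over $s$ is finite. For $s\ge \ell$ we have $a+s\ge d\ge \ell$, so by \eqref{eq:MP-rough} $M(P)\le d^\ell\le L_{a+s}$, and hence $\mathcal H_{\ell,s}=\emptyset$. So only $0\le s\le \ell-1$ matters, which is exactly the range of Corollary \ref{cor:H-bound}.

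Since $\binom{d}{\ell}\ge 0$, I would then bound
\[
\Bigl(M(P)-\binom{d}{\ell}\Bigr)_+\le \Bigl(L_a-\binom{d}{\ell}\Bigr)_+ +\sum_{s=0}^{\ell-1}L_{a+s+1}\mathbf 1[P\in\mathcal H_{\ell,s}].
\]
For $a=0$ the first term vanishes, because $L_0=1=\binom{d}{d}$.

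\textbf{Step 2: summing over $P$.} This gives
\[
X\le \sum_{a=1}^{d}\binom{k}{d-a}(8d)^a+\sum_{\ell}\sum_{s=0}^{\ell-1}\binom{k-s-1}{\ell-s-1}(8d)^{a+s+1},
\]
where the second sum uses Corollary \ref{cor:H-bound}. In the second sum, set $j=a+s+1\ge1$, so that $\ell-s-1=d-j$. The number of pairs $(a,s)$ with $a,s\ge 0$ and $a+s+1=j$ is $j$, and $\binom{k-s-1}{d-j}\le\binom{k}{d-j}$.

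The key binomial estimate is
\[
\frac{\binom{k}{d-j}}{\binom{k}{d}}=\prod_{i=0}^{j-1}\frac{d-i}{k-d+j-i}\le\Bigl(\frac{d}{k-d}\Bigr)^j .
\]
With it, each $j$-term is at most $(8d)^j(d/(k-d))^j\binom{k}{d}=\lambda^j\binom{k}{d}$. Therefore
\[
X\le\binom{k}{d}\Bigl(\sum_{j\ge1}\lambda^j+\sum_{j\ge1}j\lambda^j\Bigr)=\binom{k}{d}\Bigl(\frac{\lambda}{1-\lambda}+\frac{\lambda}{(1-\lambda)^2}\Bigr).
\]
This equals the claimed $\frac{2\lambda}{1-\lambda}+\frac{\lambda^2}{(1-\lambda)^2}$, since $\frac{\lambda}{(1-\lambda)^2}=\frac{\lambda}{1-\lambda}+\frac{\lambda^2}{(1-\lambda)^2}$. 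Convergence of both series requires $\lambda<1$, which the numerical step confirms.

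\textbf{Step 3: the numerical bound.} From $k\ge 30d^2$ and $d\ge3$ we get
\[
\lambda\le \frac{8d^2}{30d^2-d}\le\frac{8}{30-1/3}=\frac{24}{89}.
\]
The bound is increasing in $\lambda$. At $\lambda=24/89$ we have $\lambda/(1-\lambda)=24/65$, and the bound becomes $48/65+(24/65)^2\approx0.875<400/441$.

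\textbf{Main obstacle.} The delicate point is the bookkeeping in Step 1. The excess must not be bounded crudely by $M(P)$, since that would cost a term $\binom{k}{d}$ from the top layer. The argument relies on $L_0=\binom{d}{d}$ removing the top-layer baseline. It also needs the empty-$\mathcal H_{\ell,s}$ observation for $s\ge\ell$, so that Corollary \ref{cor:H-bound} applies to every remaining term.
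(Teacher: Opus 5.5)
Your proof is correct and is essentially the paper's layer-cake argument. The paper bounds the top layer via $(M(P)-1)_+$ and the lower layers via $M(P)$ separately, obtaining $\sum_{r\ge1}\lambda^r+\sum_{a\ge1}\lambda^a+\sum_{a\ge1}\sum_{r\ge a+1}\lambda^r$, which is exactly your unified count $\sum_j\lambda^j+\sum_j j\lambda^j$. The only other difference is cosmetic: you use the slightly sharper $\lambda\le 24/89$, whereas the paper's $\lambda\le 8/29$ gives exactly $400/441$.
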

	\begin{proof}
		We first consider the top layer $\ell=d$. 

 Here $a=0$ and $\binom{d}{d}=1$.  Since $M(P)\le d^d\le L_d$, $L_0=1$, a simple stratify gives that
		\[
		(M(P)-1)_+
		\le
		\sum_{s=0}^{d-1}(L_{s+1}-L_s)\mathbf 1_{\{M(P)>L_s\}},
		\]
		where we denote by $\mathbf{1}\{E\}$ the indicator function of an event $E$,
		that is, $\mathbf{1}_{\{E\}} = 1$ if $E$ holds and $0$ otherwise.
		
		Summing over $P\in\binom{E}{d}$ and using Corollary \ref{cor:H-bound}, we get
		\[
		\begin{aligned}
			\sum_{P\in\binom{E}{d}}(M(P)-1)_+
			&\le
			\sum_{s=0}^{d-1}(L_{s+1}-L_s)|\mathcal H_{d,s}|  \\
			&\le
			\sum_{s=0}^{d-1}L_{s+1}\binom{k-s-1}{d-s-1}.
		\end{aligned}
		\]
		Put $r=s+1$.  Since
		$
		\binom{k-r}{d-r}\le \binom{k}{d-r}
		$
		and
		$
		\frac{\binom{k}{d-r}}{\binom{k}{d}}=\prod\limits_{i=0}^{r-1}\frac{d-i}{k-d+i+1}
		\le
		\left(\frac{d}{k-d}\right)^r,
		$
		we have
		\begin{equation}\label{eq:binom-estimate}
		L_r\binom{k-r}{d-r}
		\le
		(8d)^r\left(\frac{d}{k-d}\right)^r\binom{k}{d}
		=
		\lambda^r\binom{k}{d}.
		\end{equation}
		Therefore the contribution of the top layer is at most
		\begin{equation}\label{eq:top-layer}
			\frac{\lambda}{1-\lambda}\binom{k}{d}.
		\end{equation}
		
		Now consider the lower layer $\ell=d-a<d$, where $a\ge 1$.

 Since $M(P)\le d^\ell\le L_\ell$, then
		for each $P\in\binom{E}{\ell}$,
		\[
		M(P)
		\le
		L_a+
		\sum_{\substack{s\ge 0\\ a+s<\ell}}
		(L_{a+s+1}-L_{a+s})\mathbf 1_{\{M(P)>L_{a+s}\}}.
		\]
		Using the simple bound $\left(M(P)-\binom{d}{\ell}\right)_+\le M(P)$, we get the contribution of this layer is at most
		\[
		L_a\binom{k}{d-a}
		+
		\sum_{\substack{s\ge 0\\ a+s<\ell}}
		L_{a+s+1}|\mathcal H_{\ell,s}|.
		\]
	For the second term put
		$
		r=a+s+1.
		$
		By Corollary \ref{cor:H-bound},
		\[
		|\mathcal H_{\ell,s}|
		\le
		\binom{k-s-1}{\ell-s-1}\le \binom{k}{d-r}.
		\]
		Similar to \eqref{eq:binom-estimate},
		$
		L_a\binom{k}{d-a}
		\le
		\lambda^a\binom{k}{d}
		$
		 and $
		L_{a+s+1}|\mathcal H_{\ell,s}|
		\le
		L_r\binom{k}{d-r}
		\le
		\lambda^r\binom{k}{d}.
		$
		Summing over all $a\ge 1$ and over all $r\ge a+1$, we obtain the total lower-layer contribution
		\begin{equation}\label{layer-bound}
		\left(
		\sum_{a\ge 1}\lambda^a
		+
		\sum_{a\ge 1}\sum_{r\ge a+1}\lambda^r
		\right)
		\binom{k}{d}.
		\end{equation}
		Note that $\lambda\le\frac{8d^2}{29d^2}\le\frac{8}{29}<1$, then $\sum\limits_{a\ge 1}\lambda^a=\frac{\lambda}{1-\lambda}$. Besides, $$\sum_{a\ge 1}\sum_{r\ge a+1}\lambda^r=\sum_{a\ge 1}{\frac{\lambda^{a+1}}{1-\lambda}}=\frac{\lambda}{1-\lambda}\sum\limits_{a\ge 1}\lambda^a=\frac{\lambda^2}{(1-\lambda)^2}.$$
		Therefore \eqref{layer-bound} is exactly
		\[
		\left(
		\frac{\lambda}{1-\lambda}
		+
		\frac{\lambda^2}{(1-\lambda)^2}
		\right)
		\binom{k}{d}.
		\]
		Then the total bound is given by combining \eqref{eq:top-layer}.
		
		Finally, since $\lambda\le\frac{8}{29}$,
		then
		$
		\frac{\lambda}{1-\lambda}\le \frac{8}{21}
		$
		and hence
		$
		\frac{2\lambda}{1-\lambda}
		+
		\frac{\lambda^2}{(1-\lambda)^2}
		\le
		\frac{16}{21}+\frac{64}{441}
		=
		\frac{400}{441}.
		$
	\end{proof}
\begin{corollary}\label{cor:charging}
	Suppose that $d\ge 3$ and $k\ge 30d^2$.  Let
	$
	\mathcal{T}=\binom{E}{d}\setminus\mathcal G.
	$
	Then
	\[
	\sum_{P\in\mathcal R}
	\left(M(P)-\binom{d}{|P|}\right)_+
	\le
	|\mathcal{T}|.
	\]
	Moreover, if $\mathcal R\ne\emptyset$, the inequality is strict.
\end{corollary}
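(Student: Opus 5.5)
The plan is to combine the cross-intersection property of Lemma \ref{lem:frankl-facts}(3) with the bound on $X$ from Lemma \ref{lem:layer-cake}. The right-hand side $|\mathcal T|$ turns out to be almost all of $\binom{E}{d}$ whenever $\mathcal R\neq\emptyset$, while the left-hand side is at most a fixed fraction $\frac{400}{441}$ of $\binom{k}{d}$.

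First, the trivial case. If $\mathcal R=\emptyset$, the left-hand side is an empty sum, equal to $0\le|\mathcal T|$, and there is nothing more to prove.

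Now assume $\mathcal R\neq\emptyset$ and fix some $R_0\in\mathcal R$, with $|R_0|=\ell$.
\begin{itemize}
\item We have $\ell\le d$. Indeed, if $|P|>d$ then $\mathcal F(P)=\emptyset$, so $P$ is not rich. Also $R_0\neq\emptyset$, since $\mathcal F(\emptyset)\subseteq\{\emptyset\}$ has union of size $0$.
\item By Lemma \ref{lem:frankl-facts}(3), every $G\in\mathcal G$ meets $R_0$.
\item Hence every $d$-subset of $E\setminus R_0$ lies in $\mathcal T=\binom{E}{d}\setminus\mathcal G$.
\item Since $|E\setminus R_0|=k-\ell\ge k-d$, this gives
\[
|\mathcal T|\ge \binom{k-\ell}{d}\ge\binom{k-d}{d}.
\]
\end{itemize}

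On the other side, \eqref{eq:rich-le-X} and Lemma \ref{lem:layer-cake} give
\[
\sum_{P\in\mathcal R}\Bigl(M(P)-\binom{d}{|P|}\Bigr)_+\le X\le \frac{400}{441}\binom{k}{d}.
\]
It therefore suffices to show $\binom{k-d}{d}>\frac{400}{441}\binom{k}{d}$. Write the ratio as a product and apply Bernoulli's inequality:
\[
\frac{\binom{k-d}{d}}{\binom{k}{d}}=\prod_{i=0}^{d-1}\frac{k-d-i}{k-i}=\prod_{i=0}^{d-1}\Bigl(1-\frac{d}{k-i}\Bigr)\ge\Bigl(1-\frac{d}{k-d+1}\Bigr)^d\ge 1-\frac{d^2}{k-d+1}.
\]
Using $k\ge 30d^2$, we have $k-d+1>29d^2$, so the ratio is at least $1-\frac1{29}=\frac{28}{29}$. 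Since $\frac{28}{29}>\frac{400}{441}$, the desired strict inequality holds.

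The only step needing care is this final numerical comparison, which I expect to be routine given Lemma \ref{lem:layer-cake}. The conceptual step is recognising that a single rich set forces all of $\binom{E\setminus R_0}{d}$ into $\mathcal T$.
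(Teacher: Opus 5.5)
Your proof is correct and follows essentially the same route as the paper: a single rich set $R_0$ forces $\binom{E\setminus R_0}{d}\subseteq\mathcal T$ via Lemma~\ref{lem:frankl-facts}(3), so $|\mathcal T|\ge\binom{k-d}{d}$, and this is compared with $X\le\frac{400}{441}\binom{k}{d}$ from Lemma~\ref{lem:layer-cake}. The only difference is the numerical estimate of $\binom{k-d}{d}/\binom{k}{d}$: you get $1-\frac{d^2}{k-d+1}\ge\frac{28}{29}$, while the paper uses the cruder $\left(1-\frac{2d}{k}\right)^d\ge 1-\frac{2d^2}{k}\ge\frac{14}{15}$; both exceed $\frac{400}{441}$.
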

\begin{proof}
	The case when $\mathcal R=\emptyset$ is trivial.  Now we assume $\mathcal R\ne\emptyset$, and choose $R_0\in\mathcal R$.  By Lemma \ref{lem:frankl-facts}(3), every $G\in\mathcal G$ intersects $R_0$.  Hence every $d$-set contained in $E\setminus R_0$ is not in $\mathcal G$, it means that
	\[
	\binom{E\setminus R_0}{d}\subseteq \mathcal{T}.
	\]
	Since $|R_0|\le d$, we get
	\begin{equation}\label{eq:T-lower}
		|\mathcal{T}|\ge \binom{k-d}{d}.
	\end{equation}
	Furthermore,
	\[
	\frac{\binom{k-d}{d}}{\binom{k}{d}}
	=
	\prod_{j=0}^{d-1}\frac{k-d-j}{k-j}.
	\]
Since $k\ge30d^2$ and $\frac{k-d-j}{k-j}\ge\frac{k-2d}{k}=1-\frac{2d}{k}\ge0$, by applying Weierstrass’s product inequality,  we get
	\[
	\frac{\binom{k-d}{d}}{\binom{k}{d}}
	\ge
	\left(1-\frac{2d}{k}\right)^d
	\ge
	1-\frac{2d^2}{k}
	\ge
	\frac{14}{15}.
	\]
	Thus
	\begin{equation}\label{eq:T-ratio}
		|\mathcal{T}|\ge \frac{14}{15}\binom{k}{d}.
	\end{equation}
	On the other hand, by Lemma \ref{lem:layer-cake} and \eqref{eq:rich-le-X},
	\[
	\sum_{P\in\mathcal R}
	\left(M(P)-\binom{d}{|P|}\right)_+
	\le
	X
	\le
	\frac{400}{441}\binom{k}{d}< \frac{14}{15}\binom{k}{d}.
	\]
	This proves the result.
\end{proof}

\smallskip

Now we are ready to present the proof of Theorem \ref{thm:main}.

\begin{proof}[Proof of Theorem \ref{thm:main}]
	The cases $d=1$ and $d=2$ exactly follow from Frankl's results \cite{FranklCritical}.  Therefore we assume that $d\ge 3$.
	
	Fix an edge $E\in\mathcal F$, and we adopt the notation introduced above.  By \eqref{eq:fixed-edge-sum}, we have
	\[
	|\mathcal F|
	=
	\sum_{\ell=0}^{d}\sum_{P\in\binom{E}{\ell}} M(P).
	\]
	For non-rich $P$, we use the bound $\binom{d}{|P|}$ by \eqref{eq:nonrich-bound}.

For $P$ at the top layer, only the sets in $\mathcal G$ contribute $1=\binom{d}{d}$.  Therefore
	\begin{align*}
		|\mathcal F|
		&\le
		\sum_{\ell=0}^{d-1}\binom{k}{\ell}\binom{d}{\ell}
		+|\mathcal G|
		+
		\sum_{P\in\mathcal R}
		\left(M(P)-\binom{d}{|P|}\right)_+ .
	\end{align*}
	Note that
	$
	|\mathcal G|=\binom{k}{d}-|\mathcal{T}|.
	$
	Consequently,
	\begin{align*}
		|\mathcal F|
		&\le
		\sum_{\ell=0}^{d}\binom{k}{\ell}\binom{d}{\ell}
		-|\mathcal{T}|
		+
		\sum_{P\in\mathcal R}
		\left(M(P)-\binom{d}{|P|}\right)_+ .
	\end{align*}
	By Corollary \ref{cor:charging}, the last two terms are at most zero.  Thus
	\[
	|\mathcal F|
	\le
	\sum_{\ell=0}^{d}\binom{k}{\ell}\binom{d}{\ell}.
	\]
	The Vandermonde's identity gives
	\[
	\sum_{\ell=0}^{d}\binom{k}{\ell}\binom{d}{\ell}
	=
	\sum_{\ell=0}^{d}\binom{k}{\ell}\binom{d}{d-\ell}
	=
	\binom{k+d}{d}.
	\]
	Thus
	$
	|\mathcal F|\le \binom{k+d}{d}.
	$
	
	It remains to discuss the equality.  If $\mathcal R\ne\emptyset$, Corollary \ref{cor:charging} gives the strict inequality.  Hence equality can occur only when $\mathcal R=\emptyset$.  Lemma \ref{lem:frankl-facts}(4) implies that, since $k\ge 30d^2>2d$, equality holds only when
	\[
	\mathcal F=\binom{Y}{k}
	\]
	for some set $Y$ of size $k+d$,
	completing the proof.
\end{proof}

	\section*{Declaration of competing interest}
	The authors declare that they have no conflicts of interests to this paper.
	
	\section*{Data availability}
	No data was used for the research described in the paper.
	
	 \section*{Acknowledgement}
	 Lu Lu is supported by National Natural Science Foundation of China (No. 12371362). T. Wu was supported by the NSFC (No. 12261071) and NSF of Qinghai Province (No. 2025-ZJ-902T).
	 
	 \section*{Declaration of AI usage}
The authors acknowledge the use of AI tools during the exploratory stage of this project. All
mathematical arguments and proofs in the final manuscript were checked and written by the
authors.



\begin{thebibliography}{99}

			\bibitem{EKR}
			P. Erd\H{o}s, C. Ko, and R. Rado,
			Intersection theorems for systems of finite sets,
			\emph{Quart. J. Math. Oxford Ser. } 12(2) (1961), 313--320.
			
			\bibitem{EL}
			P. Erd\H{o}s and L. Lov\'asz,
			Problems and results on 3-chromatic hypergraphs,
			in \emph{Infinite and Finite Sets}, North-Holland, Amsterdam, 1974, 609--627.
		
			
			
			
			\bibitem{FranklPseudo}
			P. Frankl,
			 Pseudo sunflowers,
			 \emph{European J. Combin.} 104 (2022), Paper No. 103553.
			
			\bibitem{FranklCritical}
			P. Frankl,
			 Critically intersecting hypergraphs,
			 \emph{European J. Combin.} 132 (2026), Paper No. 104286.


\bibitem{Frankl2019}
	P.~Frankl, A near exponential improvement on a bound of Erd\H{o}s and Lov\'{a}sz, {\em Combin.\ Probab.\ Comput.} 28 (2019), 1--7.

	
\bibitem{FOT1996}
P.~Frankl, K.~Ota, and N.~Tokushige, Covers in uniform intersecting families and a counterexample to a conjecture of Lov\'{a}sz, {\em J.\ Combin.\ Theory Ser.\ A} 74 (1996), 33--42.

	
\bibitem{Furedi}
Z. F\"uredi,
	On maximal intersecting families of finite sets,
\emph{J. Combin. Theory Ser. A} 28 (1980), 282--289.

\bibitem{Lovasz1975} L. Lov\'asz, On the minimax theorems of combinatorics, {\em Mat. Lapok} 26 (1975), 209--264.
			
			

	\bibitem{Tuza}
Z. Tuza, Inequalities for minimal covering sets in set systems of given rank, {\em Discrete Appl.\ Math.} 51 (1994), 187--195.
					\bibitem{AT} A.~Arman and T.~Retter, An upper bound for the size of a $k$-uniform intersecting family with covering number $k$, {\em J.\ Combin.\ Theory Ser.\ A} 147 (2017), 18--26.



\bibitem{Zakharov2024}
			D.~Zakharov, On the size of maximal intersecting families, {\em Combin.\ Probab.\ Comput.} 33 (2024), 32--49.

\bibitem{Wilson}
			R. M. Wilson,
			The exact bound in the Erd\H{o}s--Ko--Rado theorem,
			\emph{Combinatorica} 4 (1984), 247--257.

	\end{thebibliography}
\end{document}